\newcommand{\cgq}{$\mathrm{cG}(q)$}
\newcommand{\dgq}{$\mathrm{dG}(q)$}
\newcommand{\mcgq}{$\mathrm{mcG}(q)$}
\newcommand{\mdgq}{$\mathrm{mdG}(q)$}
\newcommand{\OD}[2]{\frac{d#1}{d#2}}
\newcommand{\PD}[2]{\frac{\partial#1}{\partial#2}}
\newcommand{\real}{\mathbb{R}}
\newtheorem{remark}{{\it Remark}}[section]
\title{Multi-Adaptive Galerkin Methods for ODEs I\thanks{Received by the editors
May 23, 2001; accepted for publication (in revised form) November 13, 2002;
published electronically May 2, 2003.
\URL sisc/24-6/38972.html}}
\author{Anders Logg\thanks{Department of Computational Mathematics,
        Chalmers University of Technology,
        SE--412 96 Göteborg, Sweden
        (logg@math.chalmers.se).}}
\begin{document}

\maketitle
\vspace{-1in}
\slugger{sisc}{2003}{24}{6}{1879--1902}
\vspace{.7in}

\setcounter{page}{1879}


\begin{abstract}
  	We present \emph{multi-adaptive} versions of the standard
  	continuous and discontinuous Galerkin methods for ODEs.
	Taking adaptivity one step further, we allow for individual time-steps,
	order and quadrature, so that in particular each individual component
	has its own time-step sequence.
	This paper contains a description of the methods, an analysis
	of their basic	properties, and a posteriori error analysis.
	In the accompanying paper [A.~Logg, {\it SIAM J.~Sci.~Comput.,} submitted],
	we present
	adaptive algorithms for time-stepping and global error control
	based on the results of the current paper.
\end{abstract}

\begin{keywords}
	multi-adaptivity, individual time-steps, local time-steps, ODE,
	continuous Galerkin, discontinuous Galerkin,
	global error control, adaptivity, mcG(q), mdG(q)
\end{keywords}

\begin{AMS}
	65L05, 65L07, 65L20, 65L50, 65L60, 65L70
\end{AMS}

\begin{PII}
S1064827501389722
\end{PII}

\pagestyle{myheadings}
\thispagestyle{plain}
\markboth{ANDERS LOGG}{MULTI-ADAPTIVE GALERKIN METHODS FOR ODEs I}

\section{Introduction}

In this paper, we present
multi-adaptive Galerkin methods for
initial value problems for systems of ODEs of the form
\begin{equation}
  \left\{
    \begin{array}{rcl}
      \dot{u}(t) &=& f(u(t),t),\qquad t\in(0,T], \\
      u(0) &=& u_0,
    \end{array}
  \right.
  \label{eq:u'=f}
\end{equation}
where $u : [0,T] \rightarrow \real^N$,
      $f : \real^N \times (0,T] \rightarrow \real^N$ is a given bounded
		function that is Lipschitz-continuous in $u$,
      $u_0 \in \real^N$ is a given initial condition, and
      $T>0$ is a given final time.
We use the term \emph{multi-adaptivity} to describe methods with individual time-stepping
for the different components $u_i(t)$ of the solution vector $u(t)=(u_i(t))$, including
({i}) time-step length,
({ii}) order, and
({iii})
quadrature, all chosen adaptively in a computational feedback process.
In the companion paper \cite{logg:multiadaptivity:II}, we apply the multi-adaptive methods to
a variety of problems to illustrate the potential of multi-adaptivity.

The ODE (\ref{eq:u'=f}) models a very large class of problems, covering many areas of applications.
Often different solution components have different time-scales and thus ask
for individual time-steps.
A prime example to be studied in detail below is our own solar system, where the moon
orbits around Earth once every month, whereas the period of Pluto is
250 years. In numerical simulations of the solar system, the time-steps
needed to track the orbit of the moon accurately are thus much less
than those required for Pluto, the difference
in time-scales being roughly a factor 3,000.

Surprisingly, individual time-stepping for ODEs has received little attention in the
large literature on numerical methods for ODEs; see, e.g.,
\cite{dahlquist:thesis,hairerwanner:book1,hairerwanner:book2,butcher:book,shampine:book}.
For specific applications, such as the $n$-body problem, methods with
individual time-stepping have been used---see, e.g.,
\cite{makino:local,alexander:local,dave:local} or \cite{kessel:local}---but
a general methodology has been
lacking. Our aim is to fill this gap.
For time-dependent PDEs, in particular for conservation laws of the type
$\dot{u} + f(u)_x = 0$, attempts have been made to construct methods with individual
(locally varying in space) time-steps. Flaherty et al. \cite{FlaLoy97} have constructed a method
based on the discontinuous Galerkin method combined with local forward Euler time-stepping.
A similar approach is taken in \cite{DawKir01}, where a method based on the original work by
Osher and Sanders \cite{OshSan83} is
presented for conservation laws in one and two space dimensions. Typically the time-steps used
are based on local CFL conditions rather than error estimates for the global error and
the methods are low order in time (meaning $\leq 2$).
We believe that our work on multi-adaptive Galerkin methods (including error estimation and arbitrary order methods)
presents a general methodology to
individual time-stepping, which will result in efficient integrators also for
time-dependent PDEs.

The methods presented in this paper fall within the general framework of adaptive
Galerkin methods based on piecewise polynomial approximation (finite element methods)
for differential equations, including the continuous Galerkin method \cgq\ of order
$2q$, and the discontinuous Galerkin method \dgq\ of order $2q+1$;
more precisely, we extend the \cgq\ and \dgq\ methods to their multi-adaptive
analogues \mcgq\ and \mdgq.
Earlier work on adaptive error control for the \cgq\ and \dgq\ methods include
\cite{delfour:dg,ejt:dg,claes:dg,estep:cg,estep:dg,EstWil96}.
The techniques for error analysis used in these references,
developed by Johnson and coworkers
(see, e.g., \cite{EJ:parab:I,EJ:parab:II,EJ:parab:III,EJ:parab:IV,EJ:parab:V,EJ:parab:VI},
and \cite{EJ:actanumerica} in particular) naturally carries over to the multi-adaptive
methods.

The outline of the paper is as follows: In section \ref{sec:features} we summarize the key
features of the multi-adaptive methods, and in section \ref{sec:comparison} we discuss
the benefits of the new methods in comparison to standard ODE codes.
We then motivate and present the formulation of the multi-adaptive methods
\mcgq\ and \mdgq\ in section \ref{sec:method}. Basic properties of these methods, such as
order, energy conservation, and monotonicity, are discussed in section \ref{sec:properties}.
In the major part of this paper, section \ref{sec:aposteriori}, we derive a posteriori error
estimates for the two methods based on duality arguments, including Galerkin errors, numerical errors,
and quadrature errors. We also prove an a posteriori error estimate for stability factors computed
from approximate dual solutions.

\section{Key features}
\label{sec:features}

We summarize the key features of our work on the \mcgq\ and \mdgq\ methods
as follows.

\subsection{Individual time-steps and order}
\label{sec:individual}
To discretize (\ref{eq:u'=f}), we introduce for each component, $i=1,\ldots,N$, a
partition of the time-interval $(0,T]$ into $M_i$ subintervals, $I_{ij} = (t_{i,j-1},t_{ij}]$,
$j=1,\ldots,M_i$,
and we seek an approximate solution $U(t)=(U_i(t))$ such that $U_i(t)$ is a polynomial of
degree $q_{ij}$ on every local interval $I_{ij}$.
Each
individual component $U_i(t)$ thus has its own sequence of time-steps,
$\{k_{ij}\}_{j=1}^{M_i}$.
The entire collection of individual time-intervals $\{I_{ij}\}$ may be organized
into a sequence of \emph{time-slabs}, collecting the time-intervals between
certain synchronised time-levels common to all components,
as illustrated in Figure \ref{fig:intervals}.

\begin{figure}[t]
	\begin{center}
		\leavevmode
		\psfrag{I}{$I_{ij}$}
		\psfrag{i}{$i$}
		\psfrag{k}{\hspace{-0.2cm}$k_{ij}$}
		\psfrag{0}{$0$}
		\psfrag{T}{$T$}
		\includegraphics[width=12cm]{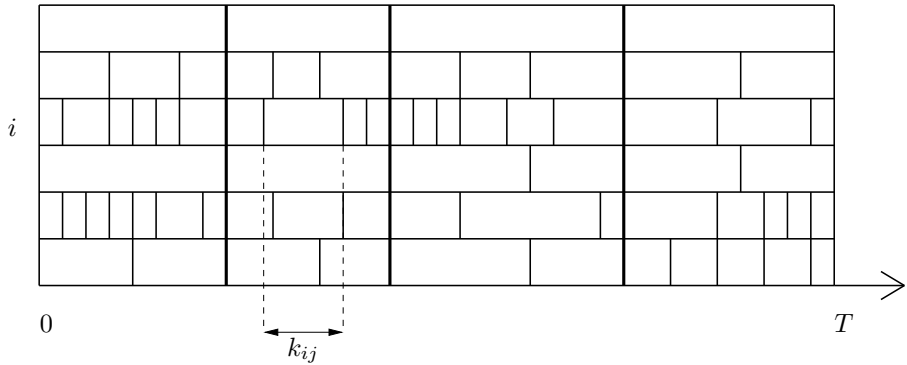}
		\caption{Individual time-discretizations for different components.}
		\label{fig:intervals}
	\end{center}
\end{figure}

\subsection{Global error control}
Our goal is to compute an approximation $U(T)$ of the exact solution $u(T)$ at final time
$T$ within a given tolerance $\mathrm{TOL}>0$, using a minimal amount of computational work.
This goal includes an aspect of \emph{reliability} (the error should be less than
the tolerance) and an aspect of \emph{efficiency} (minimal computational work).
To measure the error we choose a norm, such as the Euclidean norm $\|\cdot\|$ on
$\real^N$, or more generally some other quantity of interest (see \cite{sz:localerror}).

The mathematical basis of global error control in $\|\cdot\|$ for \mcgq\
is an error
representation of the form
\begin{equation}
	\| U(T) - u(T) \| = \int_0^T (R,\varphi) \ dt,
	\label{eq:inl,errorrepr}
\end{equation}
where $R = (R_i) = R(U,t) = \dot{U}(t) - f(U(t),t)$ is the \emph{residual} vector of the approximate
solution $U(t)$, $\varphi(t)$ is the solution of an associated linearized dual
problem, and $(\cdot,\cdot)$ is the $\real^N$ scalar product.

Using the Galerkin orthogonality, the error representation can be
converted into an error bound of the form
\begin{equation}
	\|U(T)-u(T)\| \leq \sum_{i=1}^N S_i(T) \max_{0\leq t \leq T} k_i(t)^{q_i(t)} |R_i(U,t)|,
	\label{eq:inl,errorestimate}
\end{equation}
where $\{S_i(T)\}_{i=1}^N$ are \emph{stability factors} for the different
components, depending on the dual solution $\varphi(t)$, and where
$k_i(t)=k_{ij}$, $q_i(t)=q_{ij}$ for $t\in I_{ij}$.
The error bound may take different forms depending on
how $\int_0^T (R,\varphi) \ dt$ is bounded in terms of $R$ and $\varphi$.

By solving the dual problem numerically, the individual stability factors $S_i(T)$
may be determined approximately, and thus the right-hand side of (\ref{eq:inl,errorestimate})
may be evaluated. The adaptive algorithm seeks to satisfy the \emph{stopping criterion}
\begin{equation}
	\sum_{i=1}^N S_i(T) \max_{0\leq t \leq T} k_i(t)^{q_i(t)} |R_i(U,t)| \leq \mathrm{TOL},
\end{equation}
with maximal time-steps $k = ( k_i(t) )$.

\subsection{Iterative methods}

Both \mcgq\ and \mdgq\ give rise to systems of nonlinear algebraic equations, coupling the values
of $U(t)$ over
each time-slab. Solving these systems with full Newton may be quite heavy, and we
have instead successfully used diagonal Newton methods of more explicit nature.

\subsection{Implementation of higher-order methods}
We have implemented\break \mcgq\ and \mdgq\ in
C++ for arbitrary $q$, which in practice means
$2q\leq 50$.
The implementation, \emph{Tanganyika}, is described in more detail
in \cite{logg:multiadaptivity:II} and
is publicly (GNU GPL) available for Linux/Unix \cite{logg:tanganyika}.

\subsection{Applications}

We have applied \mcgq\ and \mdgq\ to a variety of problems to illustrate
their potential; see \cite{logg:multiadaptivity:II}.
(See also \cite{logg:exjobb} and \cite{logg:oxford}.)
In these applications, including the Lorenz system, the solar system, and
a number of time-dependent PDE problems, we demonstrate the use of individual
time-steps, and for each system we solve the dual problem to collect extensive
information about the problems stability features,
which can be used for global error control.

\section{Comparison with standard ODE codes}
\label{sec:comparison}

Standard ODE codes use time-steps which are variable in time but the
same for all components, and the time-steps are adaptively
chosen by keeping the ``local error'' below a given local error tolerance
set by the user.
The global error connects to the local error through an estimate,
corresponding to (\ref{eq:inl,errorestimate}), of the form
\begin{equation}
	\{\mbox{global error}\} \leq S \ \max \{\mbox{local error}\},
	\label{eq:estimate,localerror}
\end{equation}
where $S$ is a stability factor.
Standard codes do not compute $S$,
which means that the connection between the
global error and the local error is left to be determined by the clever user,
typically by
computing with a couple of different tolerances.

Comparing the adaptive error control of standard ODE codes with the error control
presented in this paper and the accompanying paper \cite{logg:multiadaptivity:II},
an essential difference is thus the technique to estimate
the global error: either
by clever trial-and-error or, as we prefer,
by solving the dual problem and computing the stability factors.
Both approaches carry extra costs and what is best may be debated;
see, e.g., \cite{sz:localerror} for a comparison.

However, expanding the scope to multi-adaptivity with individual stability factors
for the different components, trial-and-error becomes
very difficult or impossible, and the methods for adaptive time-stepping and error control presented below
based on solving the dual problem seem to bring clear advantages in efficiency
and reliability.

For a presentation of the traditional approach to error estimation
in ODE codes, we refer to
\cite{petzold:book}, where the following rather pessimistic view is presented:
\emph{Here we just note that a precise error bound is often unknown and not really needed.}
We take the opposite view:
\emph{global error control is always needed and often possible to obtain at a reasonable cost}.
We hope that multi-adaptivity will bring new life to the discussion on efficient and reliable error control
for ODEs.

\section{Multi-adaptive Galerkin}
\label{sec:method}

In this section we present the multi-adaptive Galerkin methods,
\mcgq\ and \mdgq,
based on the discretization presented in section \ref{sec:individual}.

\subsection{The \protect\boldmath\mcgq\ method}
\label{sec:method,cg}

The \mcgq\ method for (\ref{eq:u'=f}) reads as follows:
Find $U\in V$ with $U(0)=u_0$, such that
\begin{equation}
  	 \int_0^T (\dot{U},v)\ dt = \int_0^T (f(U,\cdot),v) \ dt \qquad
    \forall v\in W,
    \label{eq:fem,mcg}
\end{equation}
where
\begin{equation}
  \begin{array}{rcl}
    V &=& \{v \in [C([0,T])]^N : v_i|_{I_{ij}}\in \mathcal{P}^{q_{ij}}(I_{ij}), \
    j=1,\ldots,M_i, \ i=1,\ldots,N \},\\
    W &=& \{v : v_i|_{I_{ij}}\in \mathcal{P}^{q_{ij}-1}(I_{ij}), \
    j=1,\ldots,M_i, \ i=1,\ldots,N \},\\
  \end{array}
  \label{eq:spaces,mcg}
\end{equation}
and where $\mathcal{P}^q(I)$ denotes the linear space of polynomials of degree $\leq q$ on $I$.
The trial functions in $V$ are thus continuous piecewise polynomials, locally
of degree $q_{ij}$,
and the test functions in $W$ are discontinuous piecewise polynomials that
are locally of degree $q_{ij}-1$.

Noting that the test functions are discontinuous, we can rewrite the global problem
(\ref{eq:fem,mcg}) as a number of successive local problems for each
component: For $i=1,\ldots,N$, $j=1,\ldots,M_i$,
find $U_i|_{I_{ij}}\in \mathcal{P}^{q_{ij}}(I_{ij})$ with $U_i(t_{i,j-1})$ given, such that
\begin{equation}
    \int_{I_{ij}} \dot{U}_i v \ dt = \int_{I_{ij}} f_i(U,\cdot) v \ dt \qquad
    \forall v\in \mathcal{P}^{q_{ij}-1}(I_{ij}).
    \label{eq:fem,mcg,local}
\end{equation}

We notice the presence of the vector $U(t)=(U_1(t),\ldots,U_N(t))$ in the local problem for $U_i(t)$ on $I_{ij}$. If
thus component $U_{i_1}(t)$ couples to component $U_{i_2}(t)$ through $f$, this means
that in order to solve the local problem for component $U_{i_1}(t)$ we need
to know the values of component $U_{i_2}(t)$ and vice versa.
The solution is thus implicitly defined by (\ref{eq:fem,mcg,local}).
Notice also that if we define the \emph{residual} $R$ of the approximate solution $U$
as $R_i(U,t) = \dot{U}_i(t) - f_i(U(t),t)$, we can rewrite (\ref{eq:fem,mcg,local}) as
\begin{equation}
    \int_{I_{ij}} R_i(U,\cdot) v \ dt = 0 \qquad
    \forall v\in \mathcal{P}^{q_{ij}-1}(I_{ij}),
    \label{eq:fem,mcg,local,orthogonality}
\end{equation}
i.e., the residual is orthogonal to the test space on every local interval. We refer
to this as the \emph{Galerkin orthogonality} for the \mcgq\ method.

Making an ansatz for every component $U_i(t)$ on every local interval $I_{ij}$ in terms of a
nodal basis for $\mathcal{P}^{q_{ij}}(I_{ij})$ (see the appendix), we can
rewrite (\ref{eq:fem,mcg,local}) as
\begin{equation}
   \xi_{ijm} =
   \xi_{ij0} +
   \int_{I_{ij}} w_m^{[q_{ij}]}(\tau_{ij}(t)) \ f_i(U(t),t) \ dt, \qquad m = 1,\ldots,q_{ij},
   \label{eq:mcg,xi,expl}
\end{equation}
where $\{\xi_{ijm}\}_{m=0}^{q_{ij}}$ are the nodal degrees of freedom for $U_i(t)$ on the interval $I_{ij}$,
$\{w_{m}^{[q]}\}_{m=1}^q\subset\mathcal{P}^{q-1}(0,1)$ are corresponding
polynomial weight functions, and $\tau_{ij}$ maps $I_{ij}$ to $(0,1]$:
$\tau_{ij}(t)=(t-t_{i,j-1})/(t_{ij}-t_{i,j-1})$.
Here we assume that the solution is expressed in terms of a nodal basis with the end-points
included, so that by the continuity requirement $\xi_{ij0}=\xi_{i,j-1,q_{i,j-1}}$.

Finally, evaluating the integral in (\ref{eq:mcg,xi,expl}) using nodal
quadrature, we obtain a fully discrete scheme in the form of an implicit
Runge--Kutta method:
For $i=1,\ldots,N$, $j=1,\ldots,M_i$, find $\{\xi_{ijm}\}_{m=0}^{q_{ij}}$, with
$\xi_{ij0}$ given by the continuity requirement, such that
\begin{equation}
   \xi_{ijm} =
   \xi_{ij0} +
   k_{ij} \sum_{n=0}^{q_{ij}} w_{mn}^{[q_{ij}]} \ f_i(U(\tau_{ij}^{-1}(s_{n}^{[q_{ij}]})),\tau_{ij}^{-1}(s_n^{[q_{ij}]})),
   \quad m = 1,\ldots,q_{ij},
   \label{eq:mcg,xi,expl,quad}
\end{equation}
for certain weights $\{w_{mn}^{[q]}\}$
and certain nodal points $\{s_n^{[q]}\}$ (see the appendix).

\subsection{The \protect\boldmath\mdgq\ method}

The \mdgq\ method in local form, corresponding to
(\ref{eq:fem,mcg,local}), reads as follows:
For $i=1,\ldots,N$, $j=1,\ldots,M_i$, find $U_i|_{I_{ij}}\in \mathcal{P}^{q_{ij}}(I_{ij})$, such that
\begin{equation}
    [U_i]_{i,j-1} v(t_{i,j-1}^+) + \int_{I_{ij}} \dot{U}_i v \ dt = \int_{I_{ij}} f_i(U,\cdot) v \ dt \qquad
    \forall v\in \mathcal{P}^{q_{ij}}(I_{ij}),
    \label{eq:fem,mdg,local}
\end{equation}
where $[\cdot]$ denotes the jump,
i.e., $[v]_{ij} = v(t_{ij}^+) - v(t_{ij}^-)$, and the initial condition is specified
for $i=1,\ldots,N$, by $U_i(0^-) = u_i(0)$. On a global level, the trial and
test spaces are given by
\begin{equation}
    V = W = \{v : v_i|_{I_{ij}}\in \mathcal{P}^{q_{ij}}(I_{ij}),
    j=1,\ldots,M_i, \ i=1,\ldots,N \}.
  \label{eq:spaces,mdg}
\end{equation}
In the same way as for the continuous method, we define the residual $R$ of the approximate
solution $U$ as $R_i(U,t)=\dot{U}_i(t) - f_i(U(t),t)$, defined on the inner of every local
interval $I_{ij}$, and we rewrite (\ref{eq:fem,mdg,local}) in the form
\begin{equation}
    [U_i]_{i,j-1} v(t_{i,j-1}^+) + \int_{I_{ij}} R_i(U,\cdot) v \ dt = 0 \qquad
    \forall v\in \mathcal{P}^{q_{ij}}(I_{ij}).
    \label{eq:fem,mdg,local,orthogonality}
\end{equation}
We refer to this as the Galerkin orthogonality for the \mdgq\ method.
Notice that this is similar to (\ref{eq:fem,mcg,local,orthogonality}) if we extend the
integral in (\ref{eq:fem,mcg,local,orthogonality}) to include the left end-point of the
interval $I_{ij}$. (The derivative of the discontinuous solution is a Dirac delta function at the
end-point.)

Making an ansatz for the solution in terms of some nodal basis, we get,
as for the continuous method, the following explicit version of (\ref{eq:fem,mdg,local})
on every local interval:
\begin{equation}
    \xi_{ijm} =
    \xi_{ij0}^- +
    \int_{I_{ij}} w_m^{[q_{ij}]}(\tau_{ij}(t)) \ f_i(U(t),t) \ dt, \qquad m = 0,\ldots,q_{ij},
    \label{eq:mdg,xi,expl}
\end{equation}
or, applying nodal quadrature,
\begin{equation}
   \xi_{ijm} =
   \xi_{ij0}^- +
   k_{ij} \sum_{n=0}^{q_{ij}} w_{mn}^{[q_{ij}]} \ f_i(U(\tau_{ij}^{-1}(s_{n}^{[q_{ij}]})),\tau_{ij}^{-1}(s_n^{[q_{ij}]})), \qquad m = 0,\ldots,q_{ij},
   \label{eq:mdg,xi,expl,quad}
\end{equation}
where the weight functions, the nodal points, and the weights are not the same as for the continuous method.

\subsection{The multi-adaptive \protect\boldmath\mcgq-\mdgq\ method}

The discussion above for the two methods extends naturally to using
different methods for different components. Some of the components
could therefore be solved for using the \mcgq\ method, while for
others we use the \mdgq\ method.
We can even change methods between different intervals.

Although the formulation thus includes adaptive orders and methods, as well
as adaptive time-steps, our focus will be mainly on adaptive time-steps.

\subsection{Choosing basis functions and quadrature}

What remains in order to implement the two methods specified by
(\ref{eq:mcg,xi,expl,quad}) and (\ref{eq:mdg,xi,expl,quad}) is to choose basis functions and
quadrature. For simplicity and efficiency reasons, it is desirable to let the
nodal points for the nodal basis coincide with the quadrature points.
It turns out that for both methods, the \mcgq\ and the \mdgq\ methods, this
is possible to achieve in a natural way. We thus choose the $q+1$ \emph{Lobatto quadrature points} for
the \mcgq\ method, i.e., the zeros of $ x P_q(x) -  P_{q-1} (x)$, where $P_q$ is the $q$th-order
Legendre polynomial on the interval; for the \mdgq\ method, we choose the \emph{Radau quadrature points},
i.e., the zeros of $P_{q}(x) + P_{q+1}(x)$ on the interval (with time reversed so as to include the \emph{right} end-point).
See \cite{logg:lic:I} for a detailed discussion on this subject.
The resulting discrete schemes are related to the implicit Runge--Kutta methods referred to as Lobatto and Radau methods;
see, e.g., \cite{butcher:book}.

\section{Basic properties of the multi-adaptive Galerkin methods}
\label{sec:properties}

In this section we examine some
basic properties of the multi-adaptive methods,
including order, energy conservation, and monotonicity.

\subsection{Order}

The standard \cgq\ and \dgq\ methods are of order $2q$ and $2q+1$, respectively.
The corresponding properties hold for the multi-adaptive methods, i.e.,
\mcgq\ is of order $2q$ and \mdgq\ is of order $2q+1$, assuming that the exact solution
$u$ is smooth. We examine this in more detail in subsequent papers.

\subsection{Energy conservation for \protect\boldmath\mcgq}\enlargethispage*{12pt}

The standard \cgq\ method is\break energy-conserving for Hamiltonian systems.
We now prove that also the \mcgq\ method has this property,
with the natural restriction that we should use the same time-steps for every pair of positions and
velocities.
We consider a Hamiltonian system,
\begin{equation}
  \ddot{x} = -\nabla_x P(x),
  \label{eq:hamilton}
\end{equation}
on $(0,T]$ with $x(t)\in\real^N$, together with initial conditions for $x$ and $\dot{x}$. Here $\ddot{x}$ is the
acceleration, which by Newton's second law is balanced by the force
$F(x)=-\nabla_x P(x)$ for some potential field $P$.
With $u=x$ and $v=\dot{x}$ we rewrite (\ref{eq:hamilton}) as
\begin{equation}
  \left[
    \begin{array}{c}
      \dot{u} \\
      \dot{v}
    \end{array}
  \right] =
  \left[
    \begin{array}{c}
      v \\
      F(u)
    \end{array}
  \right] =
  \left[
    \begin{array}{c}
      f_u(v) \\
      f_v(u)
    \end{array}
  \right] = f(u,v).
  \label{eq:hamilton,u'=f}
\end{equation}
The total energy $E(t)$ is the sum of the kinetic energy $K(t)$ and
the potential energy $P(x(t))$,
\begin{equation}
  E(t) = K(t) + P(x(t)),
  \label{eq:energy}
\end{equation}
with
\begin{equation}
  K(t) = \frac{1}{2}\|\dot{x}(t)\|^2 = \frac{1}{2} \|v(t)\|^2.
  \label{eq:kinetic}
\end{equation}
Multiplying (\ref{eq:hamilton}) with $\dot{x}$ it is easy to see that
energy is conserved for the continuous problem, i.e.,
$E(t) = E(0)$ for all $t\in [0,T]$. We now prove the
corresponding property for the discrete \mcgq\ solution of (\ref{eq:hamilton,u'=f}).

\begin{theorem}\label{thm5.1}
  The multi-adaptive continuous Galerkin method conserves energy in
  the following sense: Let $(U,V)$ be the \mcgq\ solution to
  {\rm(\ref{eq:hamilton,u'=f})} defined by {\rm(\ref{eq:fem,mcg,local}).}
  Assume that the same time-steps are used for
  every pair of positions and corresponding velocities.
  Then at every synchronized time-level $\bar{t}$, such as, e.g., $T$, we have
  \begin{equation}
    K(\bar{t}) + P(\bar{t}) = K(0) + P(0),
    \label{eq:energyconservation}
  \end{equation}
  with $K(t)=\frac{1}{2}\|V(t)\|^2$ and $P(t) = P(U(t))$.
\end{theorem}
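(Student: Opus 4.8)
The plan is to mimic the continuous energy-conservation argument at the discrete level, exploiting the Galerkin orthogonality. Recall that for the continuous problem one multiplies $\dot{x}=v$, $\dot{v}=F(u)$ by the appropriate quantities and integrates: $\tfrac{d}{dt}K = (V,\dot V) = (V,F(U))$ and $\tfrac{d}{dt}P(U) = (\nabla_x P(U),\dot U) = -(F(U),V)$, so the two contributions cancel. My aim is to reproduce exactly this cancellation after integrating over a time-slab $[0,\bar t]$, where $\bar t$ is a synchronized time-level so that all components share the interval endpoints. The key point is that the \mcgq\ solution satisfies $\int_{I_{ij}} R_i v\,dt = 0$ for all $v\in\mathcal{P}^{q_{ij}-1}(I_{ij})$, i.e. $\int_{I_{ij}} \dot U_i v\,dt = \int_{I_{ij}} f_i(U,\cdot)v\,dt$.

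First I would compute $K(\bar t)-K(0)=\int_0^{\bar t}\tfrac{d}{dt}\big(\tfrac12\|V\|^2\big)\,dt = \int_0^{\bar t}(V,\dot V)\,dt$, and similarly $P(\bar t)-P(0)=\int_0^{\bar t}(\nabla_x P(U),\dot U)\,dt = -\int_0^{\bar t}(F(U),\dot U)\,dt$. I then want to show these two integrals are equal. The strategy is to apply Galerkin orthogonality componentwise. For the velocity equation $\dot V_i = f_{v,i}(U)=F_i(U)$, I would like to choose the test function $v=\dot U_i$ on each interval; for the position equation $\dot U_i = f_{u,i}(V)=V_i$, I would like to choose $v=\dot V_i$. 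Summing the resulting local identities over $j$ and over $i$, and using that the same time-steps are shared between each position–velocity pair, the boundary terms telescope across synchronized levels, and the two quadratic forms $\int(V,\dot V)$ and $\int(F(U),\dot U)$ should be identified, yielding the cancellation.

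The main obstacle is the degree mismatch in the test space. On $I_{ij}$ the trial functions $U_i,V_i$ lie in $\mathcal{P}^{q_{ij}}$, so their derivatives $\dot U_i,\dot V_i$ lie only in $\mathcal{P}^{q_{ij}-1}$, which is exactly the test space $W$ — so these are in fact \emph{admissible} test functions, and this is precisely why the argument works (and why the same $q$ and the same time-steps for each pair are required, so that $\dot U_i\in\mathcal{P}^{q_{ij}-1}$ matches the test space used for $V_i$ and conversely). Thus I expect the substitution to be legitimate without any quadrature error, provided we work with the exact formulation (\ref{eq:fem,mcg,local}) rather than the quadrature version. The delicate step is the bookkeeping: verifying that choosing $v=\dot U_i$ in the $V_i$-equation gives $\int_{I_{ij}}\dot V_i\dot U_i\,dt=\int_{I_{ij}}F_i(U)\dot U_i\,dt$, choosing $v=\dot V_i$ in the $U_i$-equation gives $\int_{I_{ij}}\dot U_i\dot V_i\,dt=\int_{I_{ij}}V_i\dot V_i\,dt$, and that subtracting these and summing over $i,j$ produces $\int_0^{\bar t}(V,\dot V)\,dt - \int_0^{\bar t}(F(U),\dot U)\,dt = 0$ exactly.

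Finally, I would assemble the pieces: since $\int_0^{\bar t}(V,\dot V)\,dt = K(\bar t)-K(0)$ and $\int_0^{\bar t}(F(U),\dot U)\,dt = -(P(U(\bar t))-P(U(0)))=-(P(\bar t)-P(0))$, the identity $\int_0^{\bar t}(V,\dot V)\,dt = \int_0^{\bar t}(F(U),\dot U)\,dt$ becomes $K(\bar t)-K(0) = -(P(\bar t)-P(0))$, which rearranges to $K(\bar t)+P(\bar t)=K(0)+P(0)$, the claimed conservation (\ref{eq:energyconservation}). I would remark that $\bar t$ being a synchronized level is essential so that the common interval endpoints let the derivative test functions be used simultaneously for the coupled position and velocity components.
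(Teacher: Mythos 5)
Your proposal is correct and takes essentially the same route as the paper's proof: the paper likewise tests the $U$-equations with $\dot{V}$ and the $V$-equations with $\dot{U}$ (admissible precisely because $\dot{U}_i,\dot{V}_i\in\mathcal{P}^{q_{ij}-1}(I_{ij})$ when each position--velocity pair shares time-steps and orders), then identifies $\int_0^{\bar{t}}(\dot{U},\dot{V})\,dt$ computed both ways with $K(\bar{t})-K(0)$ and $-(P(\bar{t})-P(0))$. Your observation that the degree drop under differentiation is exactly what makes the test functions admissible, and that exact integration (no quadrature) is required, matches the paper's argument and its accompanying remark.
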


\begin{proof}
  If every pair of positions and velocities have the same
  time-step sequence, then we may choose $\dot{V}$ as a test function in the equations for $U$, to get
  \begin{displaymath}
    \int_0^{\bar{t}} (\dot{U},\dot{V}) \ dt = \int_0^{\bar{t}} (V,\dot{V}) \ dt =
    \frac{1}{2} \int_0^{\bar{t}} \OD{}{t} \|V\|^2 \ dt = K(\bar{t}) - K(0).
  \end{displaymath}
  Similarly, $\dot{U}$ may be chosen as a test function in the equations for $V$ to get
  \begin{displaymath}
    \int_0^{\bar{t}} (\dot{V},\dot{U}) \ dt =
    \int_0^{\bar{t}} - \nabla P(U) \dot{U} \ dt =
    - \int_0^{\bar{t}} \OD{}{t} P(U) \ dt = - ( P(\bar{t}) - P(0) ),
  \end{displaymath}
  and thus $K(\bar{t}) + P(\bar{t}) = K(0) + P(0)$.
\qquad\end{proof}

\begin{remark}\label{remark5.1}\rm
  Energy conservation requires exact integration of the right-hand
  side $f$ (or at least that $\int_0^t (\dot{U},\dot{V}) \ dt + (P(t)  - P(0))= 0$)
  but can also be obtained in the case of quadrature;
  see \cite{Han01}.
\end{remark}

\subsection{Monotonicity}

We shall prove that the \mdgq\ method is $B$-stable (see \cite{butcher:book}).

\begin{theorem}\label{theorem5.2}
	Let $U$ and $V$ be the \mdgq\ solutions of {\rm(\ref{eq:u'=f})} with initial data
	$U(0^-)$ and $V(0^-)$, respectively,
	defined by {\rm(\ref{eq:fem,mdg,local})} on the same partition.
   If the right-hand side $f$ is monotone, i.e.,
	\begin{equation}
		(f(u,\cdot)-f(v,\cdot),u-v) \leq 0 \qquad \forall u,v \in \real^N,
	\end{equation}
	then, at every
	synchronized time-level $\bar{t}$, such as, e.g., $T$, we have
	\begin{equation}
		\|U(\bar{t}^-)-V(\bar{t}^-)\| \leq \|U(0^-)-V(0^-)\|.
	\end{equation}
\end{theorem}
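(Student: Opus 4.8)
The plan is to test the \mdgq\ Galerkin orthogonality against the difference of the two solutions and exploit the fact that, for the discontinuous method, the trial and test spaces coincide. Set $W = U - V$. Since $U$ and $V$ satisfy {\rm(\ref{eq:fem,mdg,local,orthogonality})} on the same partition, subtracting the two relations gives, on each local interval $I_{ij}$,
\begin{displaymath}
[W_i]_{i,j-1}\, v(t_{i,j-1}^+) + \int_{I_{ij}} \big( \dot{W}_i - (f_i(U,\cdot)-f_i(V,\cdot)) \big) v \ dt = 0 \qquad \forall v\in\mathcal{P}^{q_{ij}}(I_{ij}).
\end{displaymath}
Because the test space equals the trial space in $(\ref{eq:spaces,mdg})$, the restriction $W_i|_{I_{ij}}\in\mathcal{P}^{q_{ij}}(I_{ij})$ is itself an admissible test function, so I would take $v = W_i$.

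Next I would carry out the elementary integration by parts $\int_{I_{ij}}\dot{W}_i W_i\ dt = \tfrac12\big(W_i(t_{ij}^-)^2 - W_i(t_{i,j-1}^+)^2\big)$ and combine it with the jump term $[W_i]_{i,j-1}W_i(t_{i,j-1}^+)$. The key algebraic point is that these discontinuous boundary contributions reassemble into a telescoping nodal difference plus a nonnegative jump penalty:
\begin{displaymath}
\tfrac12\big( W_i(t_{ij}^-)^2 - W_i(t_{i,j-1}^-)^2 \big) + \tfrac12\,[W_i]_{i,j-1}^2 = \int_{I_{ij}} (f_i(U,\cdot)-f_i(V,\cdot))\, W_i \ dt .
\end{displaymath}

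Finally I would sum this identity over $j$ up to the synchronized level $\bar{t}$ and over $i=1,\ldots,N$. The nodal terms telescope from $0^-$ to $\bar{t}^-$; since for each fixed $i$ the intervals $I_{ij}$ partition $(0,\bar{t}]$, the right-hand side assembles into $\int_0^{\bar{t}} (f(U,\cdot)-f(V,\cdot),U-V)\ dt$, which is $\leq 0$ by the monotonicity hypothesis. Discarding the nonnegative jump sum then yields $\tfrac12\|W(\bar{t}^-)\|^2 \leq \tfrac12\|W(0^-)\|^2$, i.e. the claimed contraction $\|U(\bar{t}^-)-V(\bar{t}^-)\| \leq \|U(0^-)-V(0^-)\|$.

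The main obstacle is the bookkeeping of the discontinuous boundary terms: one must verify carefully that the jump contribution together with the integration-by-parts endpoint values recombines into a clean telescoping form with the correct sign on the jump penalty, and that this telescoping is consistent across components precisely because $\bar{t}$ is a time-level common to all $u_i$. The monotonicity and the admissibility of $v=W_i$ are then used in a single, routine step.
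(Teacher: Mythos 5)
Your proposal is correct and follows essentially the same route as the paper's proof: testing with $v = W_i = U_i - V_i$ (admissible since trial and test spaces coincide), rewriting the jump term plus $\int_{I_{ij}}\dot{W}_i W_i\,dt$ as the telescoping nodal difference $\tfrac12\bigl((W_{ij}^-)^2-(W_{i,j-1}^-)^2\bigr)$ plus the nonnegative penalty $\tfrac12[W_i]_{i,j-1}^2$, summing over the partition up to the synchronized level, and invoking monotonicity. The only cosmetic difference is that the paper sums to $T$ first and then remarks that the argument applies verbatim at any synchronized $\bar{t}$, whereas you sum directly to $\bar{t}$.
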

\unskip
\begin{proof}
	Choosing the test function as $v = W = U - V$ in (\ref{eq:fem,mdg,local})
	for $U$ and $V$, summing over the local intervals, and subtracting the two equations,
	we have
	\begin{displaymath}
	\sum_{ij} \left[ [W_i]_{i,j-1} W_{i,j-1}^+ +
	                                     \int_{I_{ij}} \dot{W}_i W_i \ dt \right] =
	\int_0^T (f(U,\cdot)-f(V,\cdot),U-V) \ dt \leq 0.
	\end{displaymath}
	Noting that
	\begin{displaymath}
		\begin{array}{rcl}
			[W_i]_{i,j-1} W_{i,j-1}^+ + \int_{I_{ij}} \dot{W}_i W_i \ dt
			&=&
			\frac{1}{2}(W_{i,j-1}^+)^2 + \frac{1}{2}(W_{ij}^-)^2 - W_{i,j-1}^- W_{i,j-1}^+ \\
			&=&
			\frac{1}{2} [W_i]_{i,j-1}^2
			+ \frac{1}{2}\left( (W_{ij}^-)^2 - (W_{i,j-1}^-)^2 \right), \\
		\end{array}
	\end{displaymath}
	we get
	\begin{displaymath}
		-\frac{1}{2}\|W(0^-)\|^2 + \frac{1}{2}\|W(T^-)\|^2
		\leq
		\sum_{ij} [W_i]_{i,j-1} W_{i,j-1}^+ +
					 \int_{I_{ij}} \dot{W}_i W_i \ dt
		\leq 0,
	\end{displaymath}
	so that
	\begin{displaymath}
		\|W(T^-)\| \leq \|W(0^-)\|.
	\end{displaymath}
	The proof is completed noting that the same analysis applies with $T$ replaced by
	any other synchronized time-level $\bar{t}$.
\qquad\end{proof}

\begin{remark}\label{remark5.2}\rm
The proof extends to the fully discrete scheme, using the positivity of the
quadrature weights.
\end{remark}

\section{A posteriori error analysis}
\label{sec:aposteriori}

In this section we prove a posteriori error estimates for the
multi-adaptive Galerkin methods, including
quadrature and discrete solution errors.
Following the procedure outlined in the introduction, we first define
the dual linearized problem and then derive a representation formula
for the error in terms of the dual and the residual.

\subsection{The dual problem}
\label{sec:dual}

The dual problem comes in two different forms: a continuous and a
discrete.
For the {a posteriori} error analysis of this section, we
will make use of the continuous dual.
The discrete dual problem is used to prove a priori error estimates.

To set up the continuous dual problem, we
define, for given functions $v_1(t)$ and $v_2(t)$,
\begin{equation}
  J^*(v_1(t),v_2(t),t) =
  \left(
    \int_0^1 \PD{f}{u}(sv_1(t)+(1-s)v_2(t),t) \ ds
  \right)^*,
  \label{eq:J}
\end{equation}
where $^*$ denotes the transpose, and we note that
\begin{equation}
  \begin{array}{rcl}
    J(v_1,v_2,\cdot{})(v_1-v_2)
    &=& \int_0^1 \PD{f}{u}(sv_1+(1-s)v_2,\cdot{}) \ ds \ (v_1-v_2) \vspace{3pt}\\
    &=& \int_0^1 \PD{f}{s}(sv_1+(1-s)v_2,\cdot{}) \ ds
    = f(v_1,\cdot{}) - f(v_2,\cdot{}).
  \end{array}
	\label{eq:dualproperty}
\end{equation}
The continuous dual problem is then defined as the following system of ODEs:
\begin{equation}
  \left\{
    \begin{array}{rcl}
      -\dot{\varphi} &=& J^*(u,U,\cdot)\varphi + g \quad \mbox{ on } [0,T),\\
      \varphi(T) &=& \varphi_T,
    \end{array}
  \right.
  \label{eq:dual,continuous}
\end{equation}
with data $\varphi_T$ and right-hand side $g$.  Choosing the data and
right-hand side appropriately, we obtain error estimates for different
quantities of the computed solution.  We shall assume below that the
dual solution has $q$ continuous derivatives
($\varphi_i^{(q_{ij})}\in C(I_{ij})$ locally on interval $I_{ij}$)
for the continuous method and
$q+1$ continuous derivatives ($\varphi_i^{(q_{ij}+1)}\in C(I_{ij})$ locally on interval $I_{ij}$) for
the discontinuous method.

\subsection{Error representation}

The basis for the error analysis is the following error representation,
expressing the error of an approximate solution $U(t)$
in terms of the residual $R(U,t)$ via the dual solution $\varphi(t)$.
We stress that the result of the theorem is valid for any piecewise polynomial
approximation of the solution to the initial value problem $(\ref{eq:u'=f})$ and
thus in particular the \mcgq\ and \mdgq\ approximations.

\begin{theorem}\label{th:errorrepresentation,general}
	Let $U$ be a piecewise polynomial approximation of the exact solution $u$ of {\rm(\ref{eq:u'=f}),} and
	let $\varphi$ be the solution to
  {\rm(\ref{eq:dual,continuous})} with right-hand side $g(t)$ and initial
  data $\varphi_T$, and define the residual of the approximate solution
  $U$ as $R(U,t) = \dot{U}(t) - f(U(t),t)$,
  defined on the open intervals of the partitions $\cup_j I_{ij}$ as
  \begin{displaymath}
		R_i(U,t) = \dot{U}_i(t) - f_i(U(t),t), \qquad t \in (k_{i,j-1},k_{ij}),
  \end{displaymath}
  $j=1,\ldots,M_i$, $i=1,\ldots,N$. Assume also that $U$ is right-continuous at $T$.
  Then the error $e = U - u$ satisfies
  \begin{equation}
	 L_{\varphi_T,g}(e) \equiv
    (e(T),\varphi_T) + \int_0^T (e,g) \ dt =
    \sum_{i=1}^N \sum_{j=1}^{M_i}
    \left[
      \int_{I_{ij}} R_i(U,\cdot) \varphi_i \ dt +
      [U_i]_{i,j-1} \varphi_i(t_{i,j-1})
    \right].
    \label{eq:errorrepresentation,general}
  \end{equation}
\end{theorem}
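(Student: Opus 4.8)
The plan is to run the standard duality argument: start from the linear functional $L_{\varphi_T,g}(e)$ on the left, insert the dual equation to trade the data $(\varphi_T,g)$ for $\varphi$ and $-\dot\varphi$, and then integrate by parts to expose the residual and the interelement jumps. First I would use $\varphi(T)=\varphi_T$ together with $g=-\dot\varphi - J^*(u,U,\cdot)\varphi$, read off from (\ref{eq:dual,continuous}), to write $\int_0^T (e,g)\,dt = -\int_0^T (e,\dot\varphi)\,dt - \int_0^T (e,J^*(u,U,\cdot)\varphi)\,dt$. The role of the linearization $J^*$ is to convert the difference $f(U)-f(u)$ into something linear in $e$: since $J^*$ is the transpose of $J$, moving it onto $e$ gives $(e,J^*(u,U,\cdot)\varphi)=(J(u,U,\cdot)e,\varphi)$, and the defining property (\ref{eq:dualproperty}) yields $J(u,U,\cdot)(U-u)=f(U,\cdot)-f(u,\cdot)$, i.e.\ $J(u,U,\cdot)e=f(U,\cdot)-f(u,\cdot)$. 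Thus the second integral equals $\int_0^T (f(U,\cdot)-f(u,\cdot),\varphi)\,dt$.

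Next I would rewrite this in terms of the residual. On each open subinterval one has $\dot e_i=\dot U_i-\dot u_i=\dot U_i-f_i(u)$, while $R_i=\dot U_i-f_i(U)$, so $f_i(U)-f_i(u)=\dot e_i-R_i$ there. Substituting and grouping the two remaining integrals component-by-component, the terms $-\int_{I_{ij}}(e_i\dot\varphi_i+\dot e_i\varphi_i)\,dt$ combine into $-\int_{I_{ij}}\frac{d}{dt}(e_i\varphi_i)\,dt$, leaving behind $+\int_{I_{ij}}R_i\varphi_i\,dt$, which already matches the first part of the right-hand side of (\ref{eq:errorrepresentation,general}).

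The crux is the bookkeeping of the boundary terms. Applying the fundamental theorem of calculus on each $I_{ij}$ (legitimate because $e_i\varphi_i$ is smooth on the open interval and $\varphi$ is continuous) produces, for each component $i$, the telescoping sum $-\sum_j[e_i(t_{ij}^-)\varphi_i(t_{ij})-e_i(t_{i,j-1}^+)\varphi_i(t_{i,j-1})]$. Collapsing it per component, the interior nodes contribute $-[e_i]_{ij}\varphi_i(t_{ij})$, the node $t_{i0}=0$ contributes $-e_i(0^+)\varphi_i(0)$, and $t_{iM_i}=T$ contributes $+e_i(T^-)\varphi_i(T)$. Here I would use that $u$ is continuous, so $[e_i]_{ij}=[U_i]_{ij}$, and the initial-data convention $U_i(0^-)=u_i(0)$, so that $e_i(0^+)=[U_i]_{i,0}$; reindexing then collects exactly $\sum_{j=1}^{M_i}[U_i]_{i,j-1}\varphi_i(t_{i,j-1})$.

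Finally, the remaining endpoint term $e_i(T^-)\varphi_i(T)$ cancels against $(e(T),\varphi_T)$ on the left, using $\varphi(T)=\varphi_T$ and the assumed right-continuity of $U$ at $T$ (so that $e_i(T)=e_i(T^-)$), which leaves precisely (\ref{eq:errorrepresentation,general}). The main obstacle I anticipate is exactly this last bookkeeping step: keeping the signs and the index ranges of the jump sum straight across the $N$ \emph{independent} per-component partitions, and making sure the two endpoint contributions at $0$ and at $T$ are routed to the correct terms of the identity rather than being double-counted or dropped.
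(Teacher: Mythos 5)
Your proposal is the same duality argument as the paper's proof: insert the dual equation to trade $g$ for $-\dot\varphi - J^*(u,U,\cdot)\varphi$, move $J^*$ onto $e$ and use (\ref{eq:dualproperty}) to convert the linearized term into $f(U,\cdot)-f(u,\cdot)$, integrate by parts on each $I_{ij}$, and telescope the boundary terms using continuity of $u$, the convention $U_i(0^-)=u_i(0)$ at $t=0$, and right-continuity of $U$ at $T$. The only substantive flaw is a uniform sign slip in your collapse of the telescoping sum: starting from $-\sum_j\bigl[e_i(t_{ij}^-)\varphi_i(t_{ij})-e_i(t_{i,j-1}^+)\varphi_i(t_{i,j-1})\bigr]$, the interior nodes contribute $+[e_i]_{ij}\varphi_i(t_{ij})$, the node $t=0$ contributes $+e_i(0^+)\varphi_i(0)$, and $t=T$ contributes $-e_i(T^-)\varphi_i(T)$ --- the opposite of the three signs you wrote. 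As stated, your signs would produce $-\sum_{j}[U_i]_{i,j-1}\varphi_i(t_{i,j-1})$ and a doubled rather than cancelled endpoint term; but your final two sentences (collecting the jump sum with a plus sign and cancelling $-\,(e(T^-),\varphi_T)$ against $(e(T),\varphi_T)$) silently use the correct signs, so this is a local transcription error, not a structural gap. The one cosmetic difference from the paper is ordering: you substitute $f_i(U)-f_i(u)=\dot e_i-R_i$ early and form the total derivative $\frac{d}{dt}(e_i\varphi_i)$, whereas the paper integrates $-e_i\dot\varphi_i$ by parts first and identifies $R_i=\dot e_i+f_i(u,\cdot)-f_i(U,\cdot)$ in the last step; the two are algebraically identical.
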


\begin{proof}
  By the definition of the dual problem, we have using (\ref{eq:dualproperty})
  \begin{displaymath}
    \begin{array}{rcl}
      \int_0^T (e,g) \ dt
      &=& \int_0^T (e,- \dot{\varphi} - J^*(u,U,\cdot)\varphi) \ dt \\
      &=& \sum_{ij} \int_{I_{ij}} - e_i \dot{\varphi}_i \ dt +
          \int_0^T (-J(u,U,\cdot)e,\varphi) \ dt \\
      &=& \sum_{ij} \int_{I_{ij}} - e_i \dot{\varphi}_i \ dt +
          \int_0^T (f(u,\cdot)-f(U,\cdot),\varphi) \ dt \\
      &=& \sum_{ij} \int_{I_{ij}} - e_i \dot{\varphi}_i \ dt +
          \sum_{ij} \int_{I_{ij}} (f_i(u,\cdot)-f_i(U,\cdot)) \varphi_i \ dt.
    \end{array}
  \end{displaymath}
  Integrating by parts, we get
  \begin{displaymath}
    \int_{I_{ij}} - e_i \dot{\varphi}_i \ dt
    = e_i(t_{i,j-i}^+)\varphi(t_{i,j-1}) - e_i(t_{ij}^-)\varphi(t_{ij}) +
    \int_{I_{ij}} \dot{e}_i \varphi_i \ dt,
  \end{displaymath}
  so that
  \begin{displaymath}
    \begin{array}{rcl}
      \sum_{ij} \int_{I_{ij}} - e_i \dot{\varphi}_i \ dt
      &=& \sum_{ij} [e_i]_{i,j-1} \varphi_i(t_{i,j-1}) - (e(T^-),\varphi_T) +
          \int_0^T (\dot{e},\varphi) \ dt \\
      &=& \sum_{ij} [U_i]_{i,j-1} \varphi_i(t_{i,j-1}) - (e(T),\varphi_T) +
          \int_0^T (\dot{e},\varphi) \ dt. \\
        \end{array}
  \end{displaymath}
  Thus, with $L_{\varphi_T,g}(e) = (e(T),\varphi_T) + \int_0^T (e,g) \ dt$, we have
  \begin{displaymath}
    \begin{array}{rcl}
		L_{\varphi_T,g}(e)
      &=& \sum_{ij}
          \left[
             \int_{I_{ij}} (\dot{e}_i + f_i(u,\cdot)-f_i(U,\cdot)) \varphi_i \ dt +
             [U_i]_{i,j-1} \varphi_i(t_{i,j-1})
          \right] \vspace{3pt}\\
      &=& \sum_{ij}
          \left[
             \int_{I_{ij}} (\dot{U}_i - f_i(U,\cdot)) \varphi_i \ dt +
             [U_i]_{i,j-1} \varphi_i(t_{i,j-1})
          \right] \vspace{3pt}\\
      &=& \sum_{ij}
          \left[
             \int_{I_{ij}} R_i(U,\cdot) \varphi_i \ dt +
             [U_i]_{i,j-1} \varphi_i(t_{i,j-1})
          \right], \\
    \end{array}
  \end{displaymath}
  which completes the proof.
\qquad\end{proof}

We now apply this theorem to represent the error in various
norms. As before, we let $\|\cdot\|$ denote the Euclidean norm on
$\real^N$ and define $\|v\|_{L^1([0,T],\real^n)}=\int_0^T \|v\| \ dt$.

\begin{corollary}
	\label{cor:e(T)}
	If $\varphi_T=e(T)/{ \|e(T)\| }$ and $g = 0$, then
  \begin{equation}
    \|e(T)\| =
    \sum_{i=1}^N \sum_{j=1}^{M_i}
    \left[
      \int_{I_{ij}} R_i(U,\cdot) \varphi_i \ dt +
      [U_i]_{i,j-1} \varphi_i(t_{i,j-1})
    \right].
    \label{eq:errorrepresentation,finaltime}
  \end{equation}
\end{corollary}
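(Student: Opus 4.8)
The plan is to obtain this corollary as an immediate specialization of Theorem \ref{th:errorrepresentation,general}. That theorem already provides the error representation
\begin{displaymath}
	L_{\varphi_T,g}(e) = (e(T),\varphi_T) + \int_0^T (e,g) \ dt =
	\sum_{i=1}^N \sum_{j=1}^{M_i}
	\left[
		\int_{I_{ij}} R_i(U,\cdot) \varphi_i \ dt + [U_i]_{i,j-1} \varphi_i(t_{i,j-1})
	\right]
\end{displaymath}
for \emph{arbitrary} data $\varphi_T$ and right-hand side $g$, where $\varphi$ solves the dual problem (\ref{eq:dual,continuous}) with exactly this data. Since the right-hand side of the target identity (\ref{eq:errorrepresentation,finaltime}) coincides verbatim with the right-hand side above, the entire task reduces to evaluating the left-hand side functional $L_{\varphi_T,g}(e)$ for the particular choice $\varphi_T = e(T)/\|e(T)\|$ and $g=0$.

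First I would substitute $g=0$, which kills the integral contribution $\int_0^T (e,g) \ dt = 0$, leaving only the end-time term $(e(T),\varphi_T)$. Then, inserting $\varphi_T = e(T)/\|e(T)\|$ and using that $(\cdot,\cdot)$ is the Euclidean scalar product on $\real^N$, I would compute
\begin{displaymath}
	(e(T),\varphi_T) = \left(e(T),\frac{e(T)}{\|e(T)\|}\right) = \frac{(e(T),e(T))}{\|e(T)\|} = \frac{\|e(T)\|^2}{\|e(T)\|} = \|e(T)\|.
\end{displaymath}
Thus $L_{\varphi_T,g}(e) = \|e(T)\|$, and equating this with the unchanged right-hand side of the theorem yields (\ref{eq:errorrepresentation,finaltime}) directly.

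There is essentially no substantive obstacle here; the only point requiring a word of care is the well-definedness of the normalization when $e(T)=0$. In that degenerate case the prescribed $\varphi_T$ is undefined, but the asserted identity is trivially true since its left-hand side $\|e(T)\|$ vanishes, so one may simply choose any admissible $\varphi_T$ (or read the statement as vacuous) and the conclusion still holds. Apart from this remark, the proof is a one-line specialization, and I would present it as such, emphasizing that the strength of the result lies entirely in Theorem \ref{th:errorrepresentation,general} being valid for arbitrary dual data, which makes the sharp (equality, not merely estimate) representation of $\|e(T)\|$ available for free.
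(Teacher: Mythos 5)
Your proposal is correct and matches the paper's intent exactly: the paper states Corollary \ref{cor:e(T)} without a separate proof, treating it as an immediate specialization of Theorem \ref{th:errorrepresentation,general} with $g=0$ and $\varphi_T=e(T)/\|e(T)\|$, so that $L_{\varphi_T,g}(e)=(e(T),e(T)/\|e(T)\|)=\|e(T)\|$, precisely as you compute. Your remark about the degenerate case $e(T)=0$ is a careful addition the paper does not spell out, but it does not change the argument.
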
\unskip

\begin{corollary}
	\label{cor:L1}
	If $\varphi_T=0$ and $g(t) = e(t)/{ \|e(t)\| }$, then
  \begin{equation}
    \|e\|_{L^1([0,T],\real^N)} =
    \sum_{i=1}^N \sum_{j=1}^{M_i}
    \left[
      \int_{I_{ij}} R_i(U,\cdot) \varphi_i \ dt +
      [U_i]_{i,j-1} \varphi_i(t_{i,j-1})
    \right].
    \label{eq:errorrepresentation,L1}
  \end{equation}
\end{corollary}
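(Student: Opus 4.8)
The plan is to obtain this identity as an immediate specialization of the general error representation in Theorem~\ref{th:errorrepresentation,general}, in exactly the same manner as Corollary~\ref{cor:e(T)}. The right-hand side of (\ref{eq:errorrepresentation,general}) depends on the data $\varphi_T$ and $g$ only through the associated dual solution $\varphi$, so it suffices to evaluate the left-hand functional $L_{\varphi_T,g}(e)$ for the particular data prescribed in the statement and to recognize the result as the $L^1$-norm of the error.

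First I would substitute $\varphi_T = 0$, which annihilates the end-point contribution, $(e(T),\varphi_T) = 0$, leaving $L_{0,g}(e) = \int_0^T (e,g) \ dt$. Next, with the choice $g(t) = e(t)/\|e(t)\|$, the integrand is the Euclidean scalar product $(e, e/\|e\|) = (e,e)/\|e\| = \|e\|^2/\|e\| = \|e\|$ pointwise in $t$. Hence $\int_0^T (e,g) \ dt = \int_0^T \|e\| \ dt = \|e\|_{L^1([0,T],\real^N)}$, and equating this with the right-hand side of (\ref{eq:errorrepresentation,general}) yields precisely (\ref{eq:errorrepresentation,L1}).

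I do not anticipate any genuine obstacle, since the corollary is essentially a one-line substitution into the theorem; the only point requiring a word of care is the well-definedness of $g(t) = e(t)/\|e(t)\|$ at times where $e(t) = 0$. There the normalization is ambiguous, but the contribution to the integral vanishes, so one may simply set $g(t) = 0$ on the measurable set $\{t : e(t) = 0\}$ without affecting either side of the identity. As $e = U - u$ is the difference of a piecewise polynomial and the (bounded) exact solution, the resulting $g$ is measurable and bounded, so the continuous dual problem (\ref{eq:dual,continuous}) admits a solution $\varphi$ with the regularity assumed in the theorem, and the representation applies verbatim.
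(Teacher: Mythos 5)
Your proposal is correct and follows exactly the route the paper intends: Corollary~\ref{cor:L1} is stated without separate proof as an immediate specialization of Theorem~\ref{th:errorrepresentation,general}, obtained by substituting $\varphi_T=0$ and $g(t)=e(t)/\|e(t)\|$ into $L_{\varphi_T,g}(e)$ and noting $(e,e/\|e\|)=\|e\|$, just as Corollary~\ref{cor:e(T)} specializes the same theorem. Your remark on setting $g=0$ on the set $\{t : e(t)=0\}$ is a sensible extra precaution that the paper leaves implicit.
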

\unskip
\subsection{Galerkin errors}

To obtain expressions for the Galerkin errors, i.e., the errors of the
\mcgq\ or \mdgq\ approximations, assuming exact quadrature and exact
solution of the discrete equations, we use two ingredients: the error
representation of Theorem \ref{th:errorrepresentation,general} and the
Galerkin orthogonalities, (\ref{eq:fem,mcg,local,orthogonality}) and
(\ref{eq:fem,mdg,local,orthogonality}).
We first prove the following interpolation estimate.

\begin{lemma}
	\label{lem:taylor}
  If $f\in C^{q+1}([a,b])$, then there is a constant $C_q$, depending only
  on $q$, such that
  \begin{equation}
    | f(x) - \pi^{[q]} f(x)|
    \leq
    C_q k^{q+1} \frac{1}{k} \int_a^b | f^{(q+1)}(y) | \ dy \qquad \forall x\in [a,b],
    \label{eq:interp,taylor}
  \end{equation}
  where $\pi^{[q]} f(x)$ is the $q$th-order Taylor expansion of $f$
  around $x_0=(a+b)/2$, $k=b-a$, and $C_q = 1/(2^q q!)$.
\end{lemma}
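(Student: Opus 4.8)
The plan is to recognize that $\pi^{[q]}f$ is just the degree-$q$ Taylor polynomial about the midpoint, so the quantity $f(x)-\pi^{[q]}f(x)$ is precisely the Taylor remainder, and the cleanest route is Taylor's theorem with the \emph{integral} form of the remainder. First I would write, for any $x\in[a,b]$,
\begin{displaymath}
  f(x)-\pi^{[q]}f(x) = \frac{1}{q!}\int_{x_0}^{x} (x-y)^q f^{(q+1)}(y) \ dy,
\end{displaymath}
where $x_0=(a+b)/2$. This identity is the standard integral remainder and follows from repeated integration by parts (or from the fundamental theorem of calculus applied $q+1$ times); since $f\in C^{q+1}([a,b])$ it is valid with no further hypotheses.

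The key estimate is then to bound the integrand. For $y$ between $x_0$ and $x$ we have $|x-y|\le|x-x_0|$, and because $x_0$ is the \emph{midpoint} of $[a,b]$ and $x\in[a,b]$, it follows that $|x-x_0|\le k/2$ with $k=b-a$. Hence $|x-y|^q\le (k/2)^q = k^q/2^q$ uniformly in $y$ on the segment of integration. Taking absolute values inside the remainder integral and then enlarging the region of integration from the segment between $x_0$ and $x$ to all of $[a,b]$ (which only increases the bound, since $|f^{(q+1)}|\ge 0$) gives
\begin{displaymath}
  |f(x)-\pi^{[q]}f(x)|
  \le \frac{1}{q!}\,\frac{k^q}{2^q}\int_a^b |f^{(q+1)}(y)| \ dy
  = \frac{1}{2^q q!}\,k^{q+1}\,\frac{1}{k}\int_a^b |f^{(q+1)}(y)| \ dy,
\end{displaymath}
which is exactly the claimed bound with $C_q=1/(2^q q!)$. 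The identification of the constant is immediate once the factor $(k/2)^q$ has been produced, and rewriting $k^q$ as $k^{q+1}\cdot k^{-1}$ simply matches the normalization used in the statement.

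There is no serious obstacle here: the only point requiring care is that the constant $1/2^q$ comes entirely from expanding about the midpoint, so the argument $|x-x_0|\le k/2$ must be made explicitly rather than using the cruder bound $|x-y|\le k$ (which would yield $C_q=1/q!$ and miss the sharp factor). I would also remark that one could alternatively invoke the Lagrange form of the remainder together with the trivial estimate $|f^{(q+1)}(\xi)|\le \frac{1}{k}\int_a^b|f^{(q+1)}|$ valid for some interior point $\xi$, but the integral form is preferable because it delivers the $L^1$-type right-hand side $\frac{1}{k}\int_a^b|f^{(q+1)}|$ directly, which is the form needed in the subsequent a posteriori error estimates.
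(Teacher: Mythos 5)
Your proof is correct and follows essentially the same route as the paper: Taylor's formula with the remainder in integral form, the midpoint bound $|x-x_0|\leq k/2$ yielding the kernel estimate $(k/2)^q$, and enlargement of the integration segment to all of $[a,b]$, producing exactly the constant $C_q = 1/(2^q q!)$. Your write-up is in fact more explicit than the paper's two-line computation, which compresses the midpoint argument into a single inequality.
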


{\it Proof.}
  Using Taylor's formula with the remainder in integral form, we have
\begin{eqnarray*}
| f(x) - \pi^{[q]} f(x) | &=& \Bigg| \frac{1}{q!}\int_{x_0}^x f^{(q+1)}(y)(y-x_0)^{(q)} \ dy \Bigg| \\
&\leq& \frac{1}{2^qq!} k^{q+1} \frac{1}{k}\int_a^b |f^{(q+1)}(y)| \ dy. \qquad\endproof
\end{eqnarray*}

Note that since we allow the polynomial degree to change between different components
and between different intervals, the interpolation constant will change in the same way.
We thus have $C_{q_i}=C_{q_i}(t)=C_{q_{ij}}$ for $t\in I_{ij}$.

We can now prove a posteriori error estimates for the \mcgq\ and \mdgq\ methods.
The estimates come in a number of different versions. We typically use
$E_2$ or $E_3$ to adaptively determine the time-steps and $E_0$
or $E_1$ to
evaluate the error. The quantities $E_4$ and $E_5$ may be used for qualitative
estimates of error growth. We emphasize that all of the estimates derived in
Theorems \ref{th:aposteriori,cg} and \ref{th:aposteriori,dg} below may be of use
in an actual implementation, ranging from the very sharp estimate $E_0$ containing only local quantities
to the more robust estimate $E_5$ containing only global quantities.

\begin{theorem}
	\label{th:aposteriori,cg}
  The \mcgq\ method satisfies the following estimates:
  \begin{equation}
    |L_{\varphi_T,g}(e)| = E_0 \leq E_1 \leq E_2 \leq E_3 \leq E_4
    \label{eq:aposteriori,cg,1}
  \end{equation}
  and
  \begin{equation}
    |L_{\varphi_T,g}(e)| \leq E_2 \leq E_5,
    \label{eq:aposteriori,cg,2}
  \end{equation}
  where
  \begin{equation}
    \begin{array}{rcl}
      E_0 &=&
      \left|
        \sum_{i=1}^N \sum_{j=1}^{M_i}
        \int_{I_{ij}} R_i(U,\cdot) (\varphi_i - \pi_k \varphi_i) \ dt
      \right|,
      \\[1pt]
      E_1 &=&
      \sum_{i=1}^N \sum_{j=1}^{M_i}
      \int_{I_{ij}} |R_i(U,\cdot)| |\varphi_i - \pi_k \varphi_i| \ dt,
      \\[1pt]
      E_2 &=&
      \sum_{i=1}^N \sum_{j=1}^{M_i}
      C_{q_{ij}-1}  k_{ij}^{q_{ij}+1} \
      r_{ij} s_{ij}^{[q_{ij}]},
      \\[1pt]
      E_3 &=&
      \sum_{i=1}^N
      S^{[q_i]}_i
      \max_{[0,T]} \left\{ C_{q_{i}-1} k_{i}^{q_{i}} r_{i} \right\},
      \\[1pt]
      E_4 &=&
      S^{[q],1} \sqrt{N} \max_{i,[0,T]} \left\{ C_{q_{i}-1} k_{i}^{q_{i}} r_{i} \right\},
      \\[1pt]
      E_5 &=&
      S^{[q],2} \| C_{q-1} k^q R(U,\cdot) \|_{L^2(\real^N \times [0,T])},
    \end{array}
    \label{eq:aposteriori,cg,E}
  \end{equation}
  with $C_q$ as in Lemma {\rm\ref{lem:taylor},}
  $k_i(t) = k_{ij}$, $r_i(t) = r_{ij}$, and $s_i^{[q_{i}]}(t) = s_{ij}^{[q_{ij}]}$ for $t\in I_{ij}$,
  \begin{equation}
    \begin{array}{rclrcl}
      r_{ij}            &=& \frac{1}{k_{ij}} \int_{I_{ij}} |R_i(U,\cdot)| \ dt, &
      s_{ij}^{[q_{ij}]} &=& \frac{1}{k_{ij}} \int_{I_{ij}} |\varphi^{(q_{ij})}| \ dt, \vspace{3pt}\\
      S^{[q_i]}_i       &=& \int_0^T |\varphi_i^{(q_i)}| \ dt, &
      S^{[q],1}         &=& \int_0^T \|\varphi^{(q)}\| \ dt, \\
      S^{[q],2}         &=& \left( \int_0^T \|\varphi^{(q)}\|^2 \ dt \right)^{1/2}, \\
    \end{array}
    \label{eq:aposteriori,cg,definitions}
  \end{equation}
  and where $\pi_k \varphi$ is any test space approximation of the dual solution $\varphi$.
  Expressions such as $C_{q-1}k^q R$ are defined componentwise, i.e.,
  $(C_{q-1} k^q R(U,\cdot))_i = C_{q_{ij}-1} k_{ij}^{q_{ij}} R_i(U,\cdot)$ for
  $t\in I_{ij}$.
\end{theorem}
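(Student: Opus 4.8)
The plan is to turn the abstract error representation of Theorem~\ref{th:errorrepresentation,general} into the computable quantities $E_0,\dots,E_5$ by exploiting the Galerkin orthogonality and the interpolation estimate of Lemma~\ref{lem:taylor}, and then to establish the two chains by successively coarsening the bound. First I would specialize the error representation to the \mcgq\ solution. Since the trial space consists of \emph{continuous} piecewise polynomials, every jump $[U_i]_{i,j-1}$ vanishes, so the representation reduces to $L_{\varphi_T,g}(e)=\sum_{ij}\int_{I_{ij}}R_i(U,\cdot)\varphi_i\,dt$. Because the local Galerkin orthogonality (\ref{eq:fem,mcg,local,orthogonality}) makes $R_i$ orthogonal on each $I_{ij}$ to every polynomial of degree $q_{ij}-1$, I may subtract any test-space function $\pi_k\varphi$ without changing the value, giving the identity $|L_{\varphi_T,g}(e)|=E_0$; pulling the absolute value inside the sum and the integrals then yields $E_0\le E_1$.

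For the step $E_1\le E_2$ I would make the specific choice of $\pi_k\varphi_i$ as the local Taylor interpolant of degree $q_{ij}-1$ on each $I_{ij}$, which lies in the test space. Applying Lemma~\ref{lem:taylor} with $q$ replaced by $q_{ij}-1$ gives the pointwise bound $|\varphi_i-\pi_k\varphi_i|\le C_{q_{ij}-1}k_{ij}^{q_{ij}}s_{ij}^{[q_{ij}]}$ on $I_{ij}$, uniform in $t$; inserting this and recognizing $\int_{I_{ij}}|R_i|\,dt=k_{ij}r_{ij}$ produces each summand of $E_2$. The remaining inequalities in the first chain are coarsenings. For $E_2\le E_3$ I would regroup, write $k_{ij}s_{ij}^{[q_{ij}]}=\int_{I_{ij}}|\varphi_i^{(q_{ij})}|\,dt$, factor the interval maximum of $C_{q_i-1}k_i^{q_i}r_i$ out of the $j$-sum, and recognize the remaining $j$-sum as $S_i^{[q_i]}=\int_0^T|\varphi_i^{(q_i)}|\,dt$. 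For $E_3\le E_4$ I would replace each componentwise maximum by the global maximum over $i$ and $[0,T]$ and use the pointwise-in-$t$ inequality $\sum_i|\varphi_i^{(q_i)}|\le\sqrt{N}\,\|\varphi^{(q)}\|$, integrating to turn $\sum_i S_i^{[q_i]}$ into $\sqrt{N}\,S^{[q],1}$.

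For the second chain, $|L_{\varphi_T,g}(e)|\le E_2$ is already contained in the first chain, so only $E_2\le E_5$ remains. Here the key observation is that the piecewise-constant factor $C_{q_{ij}-1}k_{ij}^{q_{ij}}r_{ij}$ is exactly the interval average of $\rho_i:=C_{q_i-1}k_i^{q_i}|R_i|$, so that $E_2=\int_0^T\sum_i\bar\rho_i\,|\varphi_i^{(q_i)}|\,dt$ with $\bar\rho_i$ the piecewise average. I would then apply Cauchy--Schwarz twice: once in $\real^N$ pointwise in $t$ to get $\sum_i\bar\rho_i|\varphi_i^{(q_i)}|\le\|\bar\rho\|\,\|\varphi^{(q)}\|$, and once in $L^2([0,T])$ to separate the two factors, giving $E_2\le\|\bar\rho\|_{L^2}\,S^{[q],2}$. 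Finally, since averaging is a contraction on $L^2$ — on each interval $(\int_{I_{ij}}\rho_i)^2\le k_{ij}\int_{I_{ij}}\rho_i^2$ by Cauchy--Schwarz — one has $\|\bar\rho\|_{L^2}\le\|\rho\|_{L^2}=\|C_{q-1}k^qR\|_{L^2(\real^N\times[0,T])}$, which completes $E_2\le E_5$.

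I expect the main obstacle to be this final step $E_2\le E_5$: unlike the others it is not a mere coarsening but requires reinterpreting the computable factor $C_{q_{ij}-1}k_{ij}^{q_{ij}}r_{ij}$ as a local average, applying Cauchy--Schwarz in both the component and time directions in the correct order, and then invoking the $L^2$-contraction property of averaging. A secondary point worth care is that the identity $E_0=|L_{\varphi_T,g}(e)|$ holds for an \emph{arbitrary} test-space approximation $\pi_k\varphi$, whereas the passage $E_1\le E_2$ forces the specific choice of the Taylor interpolant so that Lemma~\ref{lem:taylor} applies with precisely the exponent $k_{ij}^{q_{ij}}$ and constant $C_{q_{ij}-1}$.
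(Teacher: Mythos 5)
Your proposal is correct and follows essentially the same route as the paper: jump terms vanish by continuity, Galerkin orthogonality permits subtraction of an arbitrary test-space interpolant (giving $E_0$), the Taylor interpolant of degree $q_{ij}-1$ from Lemma \ref{lem:taylor} yields $E_1\leq E_2$, and the chains $E_2\leq E_3\leq E_4$ and $E_2\leq E_5$ follow by the same coarsenings and double Cauchy--Schwarz argument. The only deviation is cosmetic: in $E_2\leq E_5$ the paper keeps the residual factor exact and applies the $L^2$-projection contraction to the dual weight $s^{[q]}$ (the piecewise-constant average of $|\varphi^{(q)}|$), whereas you average the residual side and keep $|\varphi^{(q)}|$ exact --- an equivalent argument, since the piecewise-constant averaging operator is a self-adjoint $L^2$-projection.
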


\begin{proof}
  Using the error representation of Theorem \ref{th:errorrepresentation,general}
  and the Galerkin orthogonality
  (\ref{eq:fem,mcg,local,orthogonality}),
  noting that the jump terms disappear since $U$ is continuous, we
  have
  \begin{displaymath}
    |L_{\varphi_T,g}(e)| =
    \left|
      \sum_{i=1}^N \sum_{j=1}^{M_i}
      \int_{I_{ij}} R_i(U,\cdot) (\varphi_i - \pi_k \varphi_i) \ dt
    \right| = E_0,
  \end{displaymath}
  where $\pi_k \varphi$ is any test space approximation of $\varphi$.
  By the triangle inequality, we have
  \begin{displaymath}
    E_0
    \leq
    \sum_{i=1}^N \sum_{j=1}^{M_i}
    \int_{I_{ij}} |R_i(U,\cdot) (\varphi_i - \pi_k \varphi_i)| \ dt = E_1.
  \end{displaymath}
  Choosing $\pi_k \varphi_i$ as in Lemma \ref{lem:taylor}
  on every interval  $I_{ij}$,
  we have
\begin{eqnarray*}
  E_1 &\leq& \sum_{ij}C_{q_{ij}-1} k_{ij}^{q_{ij}}\int_{I_{ij}} |R_i(U,\cdot)| \ dt \
		\frac{1}{k_{ij}} \int_{I_{ij}} |\varphi_i^{(q_{ij})}| \ dt \\
      &=& \sum_{ij}C_{q_{ij}-1}  k_{ij}^{q_{ij}+1} \ r_{ij} s_{ij}^{[q_{ij}]} = E_2.
\end{eqnarray*}
  Continuing, we have
  \begin{displaymath}
    \begin{array}{rcl}
      E_2
		&\leq&
      \sum_{i=1}^N
		\max_{[0,T]} \left\{ C_{q_{i}-1} k_{i}^{q_{i}} r_{i} \right\}
		\sum_{j=1}^{M_i}
      k_{ij} s_{ij}^{[q_{ij}]} \\
		&=&
      \sum_{i=1}^N
		\max_{[0,T]} \left\{ C_{q_{i}-1} k_{i}^{q_{i}} r_{i} \right\}
		\sum_{j=1}^{M_i}
      \int_{I_{ij}} |\varphi_i^{(q_{ij})}| \ dt\\
		&=&
      \sum_{i=1}^N
		\max_{[0,T]} \left\{ C_{q_{i}-1} k_{i}^{q_{i}} r_{i} \right\}
		\int_0^T |\varphi_i^{(q_{i})}| \ dt \\
      &=&
      \sum_{i=1}^N
      S^{[q_i]}_i
      \max_{[0,T]} \left\{ C_{q_{i}-1} k_{i}^{q_{i}} r_{i} \right\}
      = E_3,
	 \end{array}
  \end{displaymath}
	and, finally,
  \begin{displaymath}
    \begin{array}{rcl}
		E_3
      &\leq&
      \max_{i,[0,T]} \left\{ C_{q_{i}-1} k_{i}^{q_{i}} r_{i} \right\}
      \sum_{i=1}^N
      \int_0^T |\varphi_i^{(q_i)}| \ dt \vspace{3pt}\\
      &\leq&
      \max_{i,[0,T]} \left\{ C_{q_{i}-1} k_{i}^{q_{i}} r_{i} \right\}
      \sqrt{N}
      \int_0^T \|\varphi^{(q)}\| \ dt \vspace{3pt}\\
      &=&
      \max_{i,[0,T]} \left\{ C_{q_{i}-1} k_{i}^{q_{i}} r_{i} \right\}
      \sqrt{N}
      S^{[q],1}
      = E_4.
    \end{array}
  \end{displaymath}
  As an alternative we can use Cauchy's inequality in a different way. Continuing from $E_2$, we have
  \begin{displaymath}
    \begin{array}{rcl}
      E_2
      &=&
      \sum_{i=1}^N \sum_{j=1}^{M_i}
      C_{q_{ij}-1}  k_{ij}^{q_{ij}+1} \
      r_{ij} s_{ij}^{[q_{ij}]} \\[1pt]
      &=&
      \sum_{i=1}^N \sum_{j=1}^{M_i}
      C_{q_{ij}-1}  k_{ij}^{q_{ij}}  s_{ij}^{[q_{ij}]}
		\int_{I_{ij}} |R_i(U,\cdot)| \ dt \\[1pt]
      &=&
      \sum_{i=1}^N
      \int_0^T
      C_{q_{i}-1} k_{i}^{q_{i}} |R_{i}(U,\cdot)| s_{i}^{[q_i]} \ dt\\[1pt]
		&=&
      \int_0^T
      (C_{q-1} k^q |R(U,\cdot)|,s^{[q]}) \ dt \\[1pt]
      &\leq&
      \int_0^T \| C_{q-1} k^q R(U,\cdot) \| \| s^{[q]} \| \ dt\\[1pt]
      &\leq&
      \left(
        \int_0^T \| C_{q-1} k^q R(U,\cdot) \|^2 \ dt
      \right)^{1/2}
      \left(
        \int_0^T \| s^{[q]} \|^2 \ dt
      \right)^{1/2},
    \end{array}
  \end{displaymath}
  where $|R(U,\cdot)|$ denotes the vector-valued function with components $|R|_i = |R_i| = |R_i(U,\cdot)|$.
  Noting now that $s$ is the $L^2$-projection of $|\varphi^{(q)}|$
  onto the piecewise constants on the partition, we have
  \begin{displaymath}
    \left(
      \int_0^T \| s^{[q]} \|^2 \ dt
    \right)^{1/2}
    \leq
    \left(
      \int_0^T \| \varphi^{(q)}\|^2 \ dt
    \right)^{1/2},
  \end{displaymath}
  so that
  \begin{displaymath}
    |L_{\varphi_T,g}(e)|
    \leq
    \| C_{q-1} k^q R(U,\cdot) \|_{L^2(\real^N \times [0,T])}
    \| \varphi^{[q]} \|_{L^2(\real^N \times [0,T])} = E_5,
  \end{displaymath}
  completing the proof.
\qquad\end{proof}

The proof of the estimates for the \mdgq\ method is obtained similarly.
Since in the discontinuous method the test functions are on every interval
of one degree higher order than the test functions in the continuous method,
we can choose a better interpolant. Thus, in view of Lemma \ref{lem:taylor},
we obtain an extra factor $k_{ij}$ in the error estimates.

\begin{theorem}
	\label{th:aposteriori,dg}
   The \mdgq\ method satisfies the following estimates:
  \begin{equation}
    |L_{\varphi_T,g}(e)| = E_0 \leq E_1 \leq E_2 \leq E_3 \leq E_4
    \label{eq:aposteriori,dg,1}
  \end{equation}
  and
  \begin{equation}
    |L_{\varphi_T,g}(e)| \leq E_2 \leq E_5,
    \label{eq:aposteriori,dg,2}
  \end{equation}
  where
  \begin{equation}
    \begin{array}{rcl}
      E_0 &=&
      \left|
		  \sum_{ij}
        \int_{I_{ij}} R_i(U,\cdot) (\varphi_i - \pi_k \varphi_i) \ dt +
        [U_i]_{i,j-1} (\varphi_i(t_{i,j-1}) - \pi_k \varphi_i(t_{i,j-1}^+))
      \right|,
      \\
      E_1 &=&
		\sum_{ij}
      \int_{I_{ij}} |R_i(U,\cdot)| |\varphi_i - \pi_k \varphi_i| \ dt +
      |[U_i]_{i,j-1}||\varphi_i(t_{i,j-1}) - \pi_k \varphi_i(t_{i,j-1}^+)|,
      \\
      E_2 &=&
		\sum_{i=1}^N \sum_{j=1}^{M_i}
      C_{q_{ij}}  k_{ij}^{q_{ij}+2} \
      \bar{r}_{ij} s_{ij}^{[q_{ij}+1]},
      \\
      E_3 &=&
      \sum_{i=1}^N
      S^{[q_i+1]}_i
      \max_{[0,T]} \left\{ C_{q_{i}} k_{i}^{q_{i}+1} \bar{r}_{i} \right\},
      \\
      E_4 &=&
      S^{[q+1],1} \sqrt{N} \max_{i,[0,T]} \left\{ C_{q_{i}} k_{i}^{q_{i}+1} \bar{r}_{i} \right\},
      \\
      E_5 &=&
      S^{[q+1],2} \| C_{q} k^{q+1} \bar{R}(U,\cdot) \|_{L^2(\real^N \times [0,T])},
    \end{array}
    \label{eq:aposteriori,dg,E}
  \end{equation}
  with
  \begin{equation}
      \bar{r}_{ij}     = \frac{1}{k_{ij}} \int_{I_{ij}} |R_i(U,\cdot)| \ dt +
                           \frac{1}{k_{ij}} | [U_i]_{i,j-1} |,  \qquad
        \bar{R}_{i}(U,\cdot) = |R_i(U,\cdot)| + \frac{1}{k_{ij}} | [U_i]_{i,j-1} |,
    \label{eq:aposteriori,dg,definitions}
  \end{equation}
  and we otherwise use the notation of Theorem {\rm\ref{th:aposteriori,cg}.}
\end{theorem}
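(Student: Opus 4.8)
The plan is to follow the structure of the proof of Theorem \ref{th:aposteriori,cg} almost verbatim, substituting the \mdgq\ Galerkin orthogonality (\ref{eq:fem,mdg,local,orthogonality}) for its \mcgq\ counterpart. Two features distinguish the discontinuous case: the jump terms $[U_i]_{i,j-1}\varphi_i(t_{i,j-1})$ in the error representation of Theorem \ref{th:errorrepresentation,general} no longer vanish (since $U$ is discontinuous), and the test space on each $I_{ij}$ now has degree $q_{ij}$ rather than $q_{ij}-1$, which lets me interpolate $\varphi$ one degree higher and thereby gain a factor $k_{ij}$ through Lemma \ref{lem:taylor}.

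First I would start from the error representation
\[ L_{\varphi_T,g}(e) = \sum_{ij}\left[\int_{I_{ij}} R_i(U,\cdot)\varphi_i\,dt + [U_i]_{i,j-1}\varphi_i(t_{i,j-1})\right]. \]
The orthogonality (\ref{eq:fem,mdg,local,orthogonality}) holds for every $v\in\mathcal{P}^{q_{ij}}(I_{ij})$, so taking $v=\pi_k\varphi_i$ (any test-space interpolant) and subtracting it simultaneously from the residual integral and from the jump term --- the latter evaluated at $t_{i,j-1}^+$ --- produces exactly $E_0$, and the triangle inequality then yields $E_1$. The key point is that a single interpolant absorbs both the volume and the jump contributions, which is why the left end-point value $\pi_k\varphi_i(t_{i,j-1}^+)$ appears in the definitions of $E_0$ and $E_1$.

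Next I would choose $\pi_k\varphi_i$ on each $I_{ij}$ to be the degree-$q_{ij}$ Taylor polynomial of $\varphi_i$ about the midpoint (a legitimate test function, since it lies in $\mathcal{P}^{q_{ij}}(I_{ij})$) and invoke Lemma \ref{lem:taylor} with $q\to q_{ij}$. Because the lemma bounds the interpolation error uniformly on the closed interval by $C_{q_{ij}}k_{ij}^{q_{ij}+1}s_{ij}^{[q_{ij}+1]}$, with $s_{ij}^{[q_{ij}+1]}=\frac{1}{k_{ij}}\int_{I_{ij}}|\varphi_i^{(q_{ij}+1)}|\,dt$, the same factor controls both the integral term (after integrating $|R_i|$) and the jump term (where it simply multiplies $|[U_i]_{i,j-1}|$). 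Pulling out one factor $k_{ij}$ merges $\frac{1}{k_{ij}}\int_{I_{ij}}|R_i|\,dt$ and $\frac{1}{k_{ij}}|[U_i]_{i,j-1}|$ into $\bar r_{ij}$, giving $E_2=\sum_{ij}C_{q_{ij}}k_{ij}^{q_{ij}+2}\bar r_{ij}s_{ij}^{[q_{ij}+1]}$.

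Finally, the chains $E_2\le E_3\le E_4$ and $E_2\le E_5$ transcribe the \mcgq\ argument with $q_i$ replaced by $q_i+1$, $r$ by $\bar r$, and $R$ by $\bar R$: factoring out $\max_{[0,T]}\{C_{q_i}k_i^{q_i+1}\bar r_i\}$ and summing $\sum_j k_{ij}s_{ij}^{[q_{ij}+1]}=S_i^{[q_i+1]}$ gives $E_3$; applying Cauchy's inequality over components with the factor $\sqrt N$ gives $E_4$; and for $E_5$, rewriting $k_{ij}\bar r_{ij}=\int_{I_{ij}}\bar R_i\,dt$, applying Cauchy's inequality in $\real^N\times[0,T]$, and using that $s^{[q+1]}$ is the $L^2$-projection of $|\varphi^{(q+1)}|$ onto the piecewise constants (so $\|s^{[q+1]}\|_{L^2}\le S^{[q+1],2}$) closes the estimate. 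I expect no genuine obstacle here: the only real care is the bookkeeping that folds the jump terms into $\bar r_{ij}$ and $\bar R_i$, together with the observation that the \emph{pointwise} bound of Lemma \ref{lem:taylor} already covers the jump term, since that term is just the interpolation error evaluated at the single point $t_{i,j-1}$.
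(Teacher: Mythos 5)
Your proposal is correct and takes essentially the same route as the paper: the paper likewise uses the \mdgq\ Galerkin orthogonality (\ref{eq:fem,mdg,local,orthogonality}) to subtract a single test-space interpolant from both the residual integral and the jump term (evaluated at $t_{i,j-1}^+$), giving $E_0$ and $E_1$, then applies the pointwise bound of Lemma \ref{lem:taylor} with the one-degree-higher interpolant so that the same factor $C_{q_{ij}}k_{ij}^{q_{ij}+1}s_{ij}^{[q_{ij}+1]}$ absorbs both contributions into $\bar{r}_{ij}$, yielding $E_2$. The remaining chains $E_2\leq E_3\leq E_4$ and $E_2\leq E_5$, which the paper dispatches with the remark that one continues as in the continuous method, are exactly the transcription you describe, so your write-up is, if anything, slightly more explicit than the paper's own.
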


\begin{proof}
  As in the proof for the continuous method, we use the error representation of
Theorem \ref{th:errorrepresentation,general} and the
  Galerkin orthogonality (\ref{eq:fem,mdg,local,orthogonality}) to
  get
  \begin{displaymath}
    |L_{\varphi_T,g}(e)| =
    \left|
		\sum_{ij}
      \int_{I_{ij}} R_i(U,\cdot) (\varphi_i - \pi_k \varphi_i) \ dt +
      [U_i]_{i,j-1} (\varphi_i(t_{i,j-1}) - \pi_k \varphi_i(t_{i,j-1}^+))
    \right| = E_0.
  \end{displaymath}
  By Lemma \ref{lem:taylor} we obtain
  \begin{displaymath}
    \begin{array}{rcl}
      E_0
      &\leq&
		\sum_{ij}
      \int_{I_{ij}} |R_i(U,\cdot)| |\varphi_i - \pi_k \varphi_i| \ dt +
      |[U_i]_{i,j-1}||\varphi_i(t_{i,j-1}) - \pi_k \varphi_i(t_{i,j-1}^+)|
      =
      E_1 \\

      &\leq&
		\sum_{ij}
      C_{q_{ij}} k_{ij}^{q_{ij}+1}
      \left(
        \int_{I_{ij}} |R_i(U,\cdot)| \ dt +
        |[U_i]_{i,j-1}|
      \right)
      \frac{1}{k_{ij}} \int_{I_{ij}} |\varphi_i^{(q_{ij}+1)}| \ dt \\
      &\leq&
		\sum_{ij}
      C_{q_{ij}} k_{ij}^{q_{ij}+2}
      \bar{r}_{ij}
      s_{ij}^{[q_{ij}+1]}
      =
      E_2.
    \end{array}
  \end{displaymath}
  Continuing now in the same way as for the continuous method, we have
  $E_2\leq E_3\leq E_4$ and $E_2\leq E_5$.
\qquad\end{proof}

\begin{remark}\rm
	When evaluating the expressions $E_0$ or $E_1$, the interpolant
	$\pi_k \varphi$ does not have to be chosen as in Lemma \ref{lem:taylor}.
	This is only a convenient way to obtain the interpolation constant.
	In section \ref{sec:evaluating} below we discuss a more convenient choice of
	interpolant.
\end{remark}

\begin{remark}\rm
  If we replace
  $\frac{1}{k_{ij}} \int_{I_{ij}} |R_i| \ dt$ by
  $\max_{I_{ij}} |R_i|$, we may replace $C_q$ by a smaller constant $C_q'$.
  The value of the constant thus depends on the
  specific way the residual is measured.
\end{remark}

\subsection{Computational errors}

The error estimates of Theorems \ref{th:aposteriori,cg} and
\ref{th:aposteriori,dg}
are based on the Galerkin orthogonalities
(\ref{eq:fem,mcg,local,orthogonality}) and (\ref{eq:fem,mdg,local,orthogonality}).
If the corresponding discrete equations are not solved
exactly,
there will be an additional contribution to the total error.
Although Theorem \ref{th:errorrepresentation,general} is still
valid, the first steps in Theorems \ref{th:aposteriori,cg} and \ref{th:aposteriori,dg}
are not. Focusing on the continuous method, the first step in the proof of
Theorem \ref{th:aposteriori,cg} is the subtraction of a test space interpolant. This
is possible, since by the Galerkin orthogonality we have
\begin{displaymath}
	\sum_{i=1}^N \sum_{j=1}^{M_i} \int_{I_{ij}}
      R_i(U,\cdot) \pi_k \varphi_i \ dt = 0
\end{displaymath}
for all test space interpolants $\pi_k \varphi$. If the residual is no longer
orthogonal to the test space, we add and subtract this term to get to the point
where the implications of Theorem \ref{th:aposteriori,cg} are valid for one of the terms.
Assuming now that $\varphi$ varies slowly on each subinterval, we
estimate the remaining extra term as follows:
\begin{equation}
  \begin{array}{rcl}
	 E_C &=&
	 \left|
      \sum_{i=1}^N \sum_{j=1}^{M_i} \int_{I_{ij}}
      R_i(U,\cdot) \pi_k \varphi_i \ dt
    \right|
	 \leq
      \sum_{i=1}^N \sum_{j=1}^{M_i} \left| \int_{I_{ij}}
      R_i(U,\cdot) \pi_k \varphi_i \ dt \right| \\
    &\approx&
    \sum_{i=1}^N \sum_{j=1}^{M_i}
    k_{ij} |\bar{\varphi}_{ij}|
    \frac{1}{k_{ij}} \left| \int_{I_{ij}} R_i(U,\cdot) \ dt \right|
    =
    \sum_{i=1}^N \sum_{j=1}^{M_i}
    k_{ij} |\bar{\varphi}_{ij}|
    |\mathcal{R}^{\mathcal{C}}_{ij}| \\
    &\leq&
    \sum_{i=1}^N \bar{S}_i^{[0]} \max_{j} |\mathcal{R}^{\mathcal{C}}_{ij}|, \\
  \end{array}
  \label{eq:approx,cg}
\end{equation}
where $\bar{\varphi}$ is a piecewise constant approximation of $\varphi$ (using, say, the mean values on
the local intervals),
\begin{equation}
  \bar{S}_i^{[0]} = \sum_{j=1}^{M_i} k_{ij} |\bar{\varphi}_{ij}|
  \approx \int_0^T |\varphi_i| \ dt = S_i^{[0]}
  \label{eq:approx,stability}
\end{equation}
is a stability factor, and
we define the
\emph{discrete} or \emph{computational} residual as
\begin{equation}
  \mathcal{R}^{\mathcal{C}}_{ij} = \frac{1}{k_{ij}} \int_{I_{ij}} R_i(U,\cdot) \  dt
  = \frac{1}{k_{ij}}
    \left(
      (\xi_{ijq} - \xi_{ij0}) - \int_{I_{ij}} f_i(U,\cdot) \ dt
    \right).
  \label{eq:discreteresidual,cg}
\end{equation}
More precise estimates may be used if needed.

For the \mdgq\ method, the situation is similar with the computational residual now
defined as
\begin{equation}
  \mathcal{R}^{\mathcal{C}}_{ij}
  = \frac{1}{k_{ij}}
    \left(
      (\xi_{ijq} - \xi_{ij0}^-) - \int_{I_{ij}} f_i(U,\cdot) \ dt
    \right).
  \label{eq:discreteresidual,dg}
\end{equation}

Thus, to estimate the computational error,
we evaluate the computational residuals and multiply with the computed stability factors.

\subsection{Quadrature errors}

We now extend our analysis to take into account also quadrature errors.
We denote integrals evaluated by quadrature with $\tilde{\int}$. Starting from
the error representation as before, we have for the \mcgq\ method
\begin{equation}
	\begin{array}{rcl}
		L_{\varphi_T,g}(e)
		&=&
		 \int_0^T (R,\varphi) \ dt \vspace{3pt}\\
		&=&
		 \int_0^T (R,\varphi-\pi_k \varphi) \ dt +
		  \int_0^T (R,\pi_k \varphi) \ dt \vspace{3pt}\\
		&=&
		 \int_0^T (R,\varphi-\pi_k \varphi) \ dt +
		  \tilde{\int}_0^T (R,\pi_k \varphi) \ dt +
   		\left[
				\int_0^T (R,\pi_k \varphi) \ dt -
				\tilde{\int}_0^T (R,\pi_k \varphi) \ dt
			\right] \vspace{3pt}\\
		&=&
		 \int_0^T (R,\varphi-\pi_k \varphi) \ dt +
		  \tilde{\int}_0^T (R,\pi_k \varphi) \ dt +
   		\left(
				\tilde{\int}_0^T - \int_0^T
			\right) (f(U,\cdot),\pi_k \varphi) \ dt
	\end{array}
\end{equation}
if the quadrature is exact for $\dot{U} v$ when $v$ is a test function.
The first term of this expression was estimated in Theorem
\ref{th:aposteriori,cg} and the second term is the computational error
discussed previously (where $\tilde{\int}$ denotes that in a real implementation,
(\ref{eq:discreteresidual,cg}) is evaluated using quadrature).
The third term is the quadrature error,
which may be nonzero even if $f$ is linear, if the time-steps are different for
different components.
To estimate the quadrature error, notice that
\begin{equation}
	\begin{array}{rcl}
		\left(\tilde{\int}_0^T - \int_0^T\right) (f(U,\cdot),\pi_k \varphi) \ dt
		&=&
		\sum_{ij}
		\left( \tilde{\int}_{I_{ij}} - \int_{I_{ij}} \right)
		f_i(U,\cdot) \pi_k \varphi_i \ dt \\
		&\approx&
		\sum_{ij}
		k_{ij} \bar{\varphi}_{ij}
		\mathcal{R}^{\mathcal{Q}}_{ij}
		\leq
		\sum_{i=1}^N \bar{S}_i^{[0]} \max_j
		|\mathcal{R}^{\mathcal{Q}}_{ij}|,
	\end{array}
\end{equation}
where $\{\bar{S}_i^{[0]}\}_{i=1}^N$ are the same stability factors as in the estimate for
the computational error and
\begin{equation}
	\mathcal{R}^{\mathcal{Q}}_{ij} = \frac{1}{k_{ij}}
	\left( \tilde{\int_{I_{ij}}} f_i(U,\cdot) \ dt -
		    \int_{I_{ij}} f_i(U,\cdot) \ dt
	\right)
	\label{eq:quadratureresidual}
\end{equation}
is the \emph{quadrature residual}.
A similar estimate holds for the \mdgq\ method.

We now make a few comments on how to estimate the quadrature residual.
The Lobatto quadrature of the \mcgq\ method is exact for polynomials of degree
less than or equal to $2q-1$, and we have an order $2q$ estimate for
$\tilde{\int} - \int$ in terms of $f^{(2q)}$, and so we make the assumption
$\mathcal{R}^{\mathcal{Q}}_{ij}\propto k_{ij}^{2q_{ij}}$. If, instead of using the standard quadrature rule
over the interval with quadrature residual
$\mathcal{R}^{\mathcal{Q}_0}_{ij}$, we divide the interval into $2^m$ parts and use
the quadrature on every interval, summing up the result, we will get a different
quadrature residual, namely
\begin{equation}
	\mathcal{R}^{\mathcal{Q}_m} =
	\frac{1}{k} C 2^m (k/2^m)^{2q+1}
	= 2^{m(-2q)} C k^{2q} =
	2^{-2q} \mathcal{R}^{\mathcal{Q}_{m-1}},
	\label{quadrature,r,cg}
\end{equation}
where we have dropped the $_{ij}$ subindices.
Thus, since
$|\mathcal{R}^{\mathcal{Q}_m}| \leq
 |\mathcal{R}^{\mathcal{Q}_m}-\mathcal{R}^{\mathcal{Q}_{m+1}}| +
 |\mathcal{R}^{\mathcal{Q}_{m+1}}| =
 |\mathcal{R}^{\mathcal{Q}_m}-\mathcal{R}^{\mathcal{Q}_{m+1}}| +
 2^{-2q}|\mathcal{R}^{\mathcal{Q}_{m}}|$,
we have the estimate
\begin{equation}
	|\mathcal{R}^{\mathcal{Q}_m}| \leq
	\frac{1}{1-2^{-2q}}
	|\mathcal{R}^{\mathcal{Q}_m}-\mathcal{R}^{\mathcal{Q}_{m+1}}|.
	\label{eq:quadratureestimate,cg}
\end{equation}
Thus, by computing the integrals at two or more dyadic levels,
we may estimate quadrature residuals and thus the
quadrature error.

For the \mdgq\ method the only difference is that the basic quadrature
rule is one order better, i.e., instead of $2q$ we have $2q+1$, so that
\begin{equation}
	|\mathcal{R}^{\mathcal{Q}_m}| \leq
	\frac{1}{1-2^{-1-2q}}
	|\mathcal{R}^{\mathcal{Q}_m}-\mathcal{R}^{\mathcal{Q}_{m+1}}|.
	\label{eq:quadratureestimate,dg}
\end{equation}

\subsection{Evaluating \protect\boldmath$E_G$}
\label{sec:evaluating}

We now present an approach to estimating the quantity $\int_0^T (R(U,\cdot),\varphi-\pi_k \varphi) \ dt$
by direct evaluation, with $\varphi$ a computed dual solution and $\pi_k \varphi$ a suitably
chosen interpolant.
In this way we avoid introducing interpolation constants and computing derivatives
of the dual. Note, however, that although we do not explicitly compute any derivatives
of the dual, the regularity assumed in section \ref{sec:dual} for the dual solution is still implicitly required
for the computed quantities to make sense.
Starting now with
\begin{equation}
	E_G =
	\left|
		\sum_{i=1}^N \sum_{j=1}^{M_i} \int_{I_{ij}}
		R_i(U,\cdot) (\varphi_i - \pi_k \varphi_i) \ dt
	\right|
	\label{eq:EG,1}
\end{equation}
for the continuous method,
we realize that the best possible choice of interpolant, if we want to
prevent cancellation, is to choose $\pi_k \varphi$ such that
$R_i(U,\cdot) (\varphi_i - \pi_k \varphi_i) \geq 0$ (or $\leq 0$) on every local interval $I_{ij}$.
With such a choice of interpolant, we would have
\begin{equation}
	E_G =
	\left|
		\sum_{i=1}^N \sum_{j=1}^{M_i} \int_{I_{ij}}
		R_i(U,\cdot) (\varphi_i - \pi_k \varphi_i) \ dt
	\right| =
		\sum_{i=1}^N \sum_{j=1}^{M_i} \alpha_{ij} \int_{I_{ij}}
		|R_i(U,\cdot) (\varphi_i - \pi_k \varphi_i)| \ dt
	\label{eq:EG,2}
\end{equation}
with $\alpha_{ij}=\pm 1$.
The following lemmas give us an idea of how to choose the
interpolant.

\begin{lemma}
	If, for $i=1,\ldots,N$, $f_i=f_i(U(t),t)=f_i(U_i(t),t)$ and $f_i$ is linear or, alternatively,
	$f=f(U(t),t)$ is linear and all components have the same time-steps and order,
	then every component $R_i(U,\cdot)$ of
	the \mcgq\ residual is a Legendre polynomial of order $q_{ij}$ on
	$I_{ij}$, for $j=1,\ldots,M_i$.
\end{lemma}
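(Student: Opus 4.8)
The claim is that, under the stated hypotheses, each residual component $R_i(U,\cdot)$ restricted to $I_{ij}$ is (a scalar multiple of) the Legendre polynomial of degree $q_{ij}$ on that interval. My plan is to identify the residual as a polynomial of the correct degree and then pin down its identity via the Galerkin orthogonality. First I would observe that, by the linearity hypothesis, $f_i(U(t),t)$ is itself a polynomial on $I_{ij}$: when $f_i$ depends only on $U_i$ (or when $f$ is linear and all components share the same partition and order), the argument $U(t)$ restricted to $I_{ij}$ is a polynomial of degree $q_{ij}$, and composing with a linear $f_i$ preserves this. Hence $R_i = \dot{U}_i - f_i(U,\cdot)$ is a polynomial on $I_{ij}$; I would then check its degree is exactly $q_{ij}$, since $\dot{U}_i$ has degree $q_{ij}-1$ while $f_i(U,\cdot)$ has degree $q_{ij}$ (the highest-degree term comes from $f_i$ and is not cancelled in general).

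The key step is the orthogonality characterization. By the \mcgq\ Galerkin orthogonality (\ref{eq:fem,mcg,local,orthogonality}), $R_i$ is $L^2(I_{ij})$-orthogonal to every polynomial of degree $\leq q_{ij}-1$. A polynomial of degree exactly $q_{ij}$ on an interval that is orthogonal to all lower-degree polynomials must be a scalar multiple of the degree-$q_{ij}$ Legendre polynomial on that interval, since the (suitably normalized and rescaled) Legendre polynomials form the unique orthogonal polynomial family for the $L^2$ inner product on $I_{ij}$. Thus I would invoke the standard fact that the orthogonal complement of $\mathcal{P}^{q_{ij}-1}(I_{ij})$ inside $\mathcal{P}^{q_{ij}}(I_{ij})$ is one-dimensional and spanned by the shifted Legendre polynomial, which identifies $R_i$ up to a constant factor.

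The main obstacle is verifying the \emph{degree} claim and the polynomiality of $f_i(U,\cdot)$ cleanly under both branches of the hypothesis. In the first branch one uses that $f_i$ sees only its own component $U_i$, so no cross-coupling to components on different time-steps occurs, and linearity of $f_i$ in that single argument gives a polynomial of the same degree $q_{ij}$. In the second branch one must ensure that the synchronization of time-steps and orders across all components makes $U(t)|_{I_{ij}}$ a genuine $\real^N$-valued polynomial of uniform degree $q_{ij}$, so that a globally linear $f$ again returns degree $q_{ij}$; without the equal-time-step assumption different components would have breakpoints inside $I_{ij}$ and $f(U,\cdot)$ would fail to be a single polynomial there. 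Once polynomiality and the exact degree are secured, the orthogonality argument is immediate, so the real care lies in handling these two cases and confirming that the leading coefficient does not degenerate.
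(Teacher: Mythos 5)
Your proof is correct and follows essentially the same route as the paper: the hypotheses guarantee $R_i(U,\cdot)$ is a polynomial of degree $q_{ij}$ on $I_{ij}$, and the Galerkin orthogonality (\ref{eq:fem,mcg,local,orthogonality}) forces it into the one-dimensional orthogonal complement of $\mathcal{P}^{q_{ij}-1}(I_{ij})$ in $\mathcal{P}^{q_{ij}}(I_{ij})$, spanned by the shifted Legendre polynomial. Your worry about the leading coefficient degenerating is unnecessary: if it vanishes, the residual lies in $\mathcal{P}^{q_{ij}-1}(I_{ij})$ and is orthogonal to itself, hence is zero, which is still covered by the conclusion ``a multiple of the Legendre polynomial'' as the paper itself notes.
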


\begin{proof}
	On every interval $I_{ij}$ the residual component
	$R_i(U,\cdot)$	is orthogonal to $\mathcal{P}^{q_{ij}-1}(I_{ij})$.
	Since the conditions assumed in the statement of the lemma guarantee that
	the residual is a polynomial of degree $q_{ij}$ on every interval $I_{ij}$,
	it is clear that \linebreak on every such interval it is the $q_{ij}$th-order Legendre
	polynomial (or a multiple \linebreak thereof).
\qquad\end{proof}

Even if the rather strict conditions of this lemma do not hold, we can say something
similar. The following lemma restates this property in terms of approximations of
the residual.

\begin{lemma}
	\label{lem:legendre,appr}
	Let $\tilde{R}$ be the local $L^2$-projection of the \mcgq\ residual $R$ onto
	the trial space,
	i.e., $\tilde{R}_i(U,\cdot)|_{I_{ij}}$ is the $L^2(I_{ij})$-projection onto $\mathcal{P}^{q_{ij}}(I_{ij})$
	of $R_i(U,\cdot)|_{I_{ij}}$,
	$j=1,\ldots,M_i$, $i=1,\ldots,N$.
	Then every $\tilde{R}_i(U,\cdot)|_{I_{ij}}$ is a Legendre polynomial of degree $q_{ij}$.
\end{lemma}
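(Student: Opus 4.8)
The plan is to exploit the two orthogonality conditions available on each local interval $I_{ij}$: the defining property of the local $L^2$-projection and the Galerkin orthogonality (\ref{eq:fem,mcg,local,orthogonality}). The argument parallels the previous lemma, but with the residual replaced by its projection onto the trial space, so that no polynomiality assumption on $R_i$ itself is required.

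First I would record the two orthogonalities. By the definition of the local $L^2$-projection, the error $R_i - \tilde{R}_i$ is $L^2(I_{ij})$-orthogonal to the entire trial space $\mathcal{P}^{q_{ij}}(I_{ij})$; in particular, since $\mathcal{P}^{q_{ij}-1}(I_{ij}) \subset \mathcal{P}^{q_{ij}}(I_{ij})$, we have
\[
  \int_{I_{ij}} (R_i - \tilde{R}_i) v \ dt = 0 \qquad \forall v \in \mathcal{P}^{q_{ij}-1}(I_{ij}).
\]
On the other hand, the Galerkin orthogonality (\ref{eq:fem,mcg,local,orthogonality}) states precisely that $\int_{I_{ij}} R_i(U,\cdot) v \ dt = 0$ for all $v \in \mathcal{P}^{q_{ij}-1}(I_{ij})$.

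Subtracting these two identities eliminates $R_i$ and leaves $\int_{I_{ij}} \tilde{R}_i v \ dt = 0$ for every $v \in \mathcal{P}^{q_{ij}-1}(I_{ij})$. Thus $\tilde{R}_i|_{I_{ij}}$ is a polynomial of degree at most $q_{ij}$ that is $L^2(I_{ij})$-orthogonal to all polynomials of degree at most $q_{ij}-1$. Since the Legendre polynomials, rescaled to $I_{ij}$, form an orthogonal basis of $\mathcal{P}^{q_{ij}}(I_{ij})$ with respect to the $L^2(I_{ij})$ inner product, the only such polynomial, up to a scalar multiple, is the $q_{ij}$th-order Legendre polynomial, which gives the claim.

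I do not foresee a genuine obstacle here: the entire content is the observation that the $L^2$-projection onto $\mathcal{P}^{q_{ij}}$ inherits the Galerkin orthogonality against the one-degree-lower test space $\mathcal{P}^{q_{ij}-1}$, after which the standard characterization of Legendre polynomials as the unique degree-$q$ polynomial orthogonal to all lower-degree polynomials finishes the argument. The only point requiring a little care is to keep track that the projection is taken onto degree $q_{ij}$ while the orthogonality only constrains test functions of degree $q_{ij}-1$, so that exactly one degree of freedom, the top Legendre mode, survives and is in general nonzero.
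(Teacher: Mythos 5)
Your proof is correct and follows essentially the same route as the paper's: use the defining property of the $L^2(I_{ij})$-projection together with the Galerkin orthogonality (\ref{eq:fem,mcg,local,orthogonality}) to conclude that $\int_{I_{ij}} \tilde{R}_i(U,\cdot)\, v \ dt = 0$ for all $v \in \mathcal{P}^{q_{ij}-1}(I_{ij})$, and then invoke the characterization of the Legendre polynomial as the degree-$q_{ij}$ polynomial orthogonal to all lower-degree polynomials. You merely spell out more explicitly (and correctly) the subtraction step and the ``up to a scalar multiple'' caveat that the paper leaves implicit.
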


\begin{proof}
	Since $\tilde{R}_i(U,\cdot)$ is the $L²$-projection of $R_i(U,\cdot)$ onto
	$\mathcal{P}^{q_{ij}}(I_{ij})$ on $I_{ij}$, we have
	\begin{displaymath}
		\int_{I_{ij}} \tilde{R}_i(U,\cdot) v \ dt =
		\int_{I_{ij}} R_i(U,\cdot) v \ dt = 0
	\end{displaymath}
	for all $v\in\mathcal{P}^{q_{ij}-1}(I_{ij})$, so that $\tilde{R}_i(U,\cdot)$ is the
	$q_{ij}$th-order Legendre polynomial on \linebreak$I_{ij}$.
\qquad\end{proof}

To prove the corresponding results for the discontinuous method, we first note
some basic properties of Radau polynomials.
\begin{lemma}
	\label{lem:radau}
	Let $P_q$ be the $q$th-order Legendre polynomial on $[-1,1]$.
	Then the $q$th-order Radau polynomial, $Q_q(x)=(P_{q}(x)+P_{q+1}(x))/(x+1)$, has
	the following property:
	\begin{equation}
		I = \int_{-1}^1 Q_q(x) (x+1)^p \ dx = 0
		\label{eq:radauequation}
	\end{equation}
	for $p=1,\ldots, q$.
	Conversely, if $f$ is a polynomial of degree $q$ on $[-1,1]$ and
	has the property {\rm(\ref{eq:radauequation}),} i.e.,
	$\int_{-1}^1 f(x) (x+1)^p \ dx = 0$ for $p=1,\ldots,q$, then $f$
	is a Radau polynomial.
\end{lemma}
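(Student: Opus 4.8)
The plan is to prove both directions of the equivalence by exploiting the defining property of the Radau polynomial $Q_q(x) = (P_q(x)+P_{q+1}(x))/(x+1)$ together with the orthogonality of the Legendre polynomials on $[-1,1]$. For the forward direction I would first record that the numerator $P_q(x)+P_{q+1}(x)$ vanishes at $x=-1$ (since $P_q(-1)=(-1)^q$ and $P_{q+1}(-1)=(-1)^{q+1}$, these cancel), so that $Q_q$ is genuinely a polynomial of degree $q$. Then, to evaluate $I = \int_{-1}^1 Q_q(x)(x+1)^p \, dx$ for $1 \leq p \leq q$, I would substitute the definition of $Q_q$ to cancel one factor of $(x+1)$:
\begin{displaymath}
	I = \int_{-1}^1 \frac{P_q(x)+P_{q+1}(x)}{x+1} (x+1)^p \, dx
	= \int_{-1}^1 \bigl(P_q(x)+P_{q+1}(x)\bigr)(x+1)^{p-1} \, dx.
\end{displaymath}
Since $0 \leq p-1 \leq q-1 < q < q+1$, the polynomial $(x+1)^{p-1}$ has degree at most $q-1$ and is therefore a linear combination of $P_0,\ldots,P_{q-1}$. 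By the $L^2([-1,1])$-orthogonality of Legendre polynomials, both $\int P_q (x+1)^{p-1}\,dx$ and $\int P_{q+1}(x+1)^{p-1}\,dx$ vanish, giving $I=0$.

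For the converse, the key observation is a dimension-counting argument. Suppose $f$ is a polynomial of degree $q$ satisfying $\int_{-1}^1 f(x)(x+1)^p\,dx = 0$ for $p=1,\ldots,q$. I would argue that these $q$ linear conditions, together with the normalization, pin down $f$ up to a scalar multiple. Concretely, the map sending a degree-$q$ polynomial $f$ to the vector $\bigl(\int f\,(x+1)^p\,dx\bigr)_{p=1}^q \in \real^q$ is linear on the $(q+1)$-dimensional space $\mathcal{P}^q([-1,1])$, so its kernel has dimension at least one. Since $Q_q$ lies in this kernel by the forward direction and $Q_q \neq 0$, it suffices to show the kernel is exactly one-dimensional, i.e., that the $q$ functionals $f \mapsto \int f\,(x+1)^p\,dx$ are linearly independent on $\mathcal{P}^q$. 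This independence follows because $\{(x+1)^p\}_{p=1}^q$ are linearly independent modulo constants, so no nontrivial combination of the functionals annihilates all of $\mathcal{P}^q$; hence the kernel is precisely $\mathrm{span}\{Q_q\}$ and $f$ must be a scalar multiple of $Q_q$.

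The main obstacle I anticipate is making the converse fully rigorous rather than hand-waving the dimension count. The cleanest way to secure it is to reformulate the conditions as orthogonality statements: writing $f$ in the Legendre basis as $f = \sum_{m=0}^q c_m P_m$, the conditions $\int f\,(x+1)^p\,dx = 0$ for $p=1,\ldots,q$ are equivalent to $f$ being orthogonal to $(x+1)\,\mathcal{P}^{q-1}$, a $q$-dimensional subspace of the $(q+1)$-dimensional $\mathcal{P}^q$. The orthogonal complement of a $q$-dimensional subspace in a $(q+1)$-dimensional inner-product space is one-dimensional, so $f$ is determined up to a constant; since $Q_q$ is a nonzero element of this complement, $f$ is necessarily a multiple of $Q_q$. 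I would present this inner-product phrasing to avoid any gap in the independence claim, as it replaces the coordinate computation with a transparent orthogonal-complement dimension count.
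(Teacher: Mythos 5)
Your proof is correct. The converse half coincides with the paper's own argument: the paper notes that $\{(x+1)^p\}_{p=1}^q$ are linearly independent and orthogonal to $Q_q$, so that $\{Q_q,(x+1),\ldots,(x+1)^q\}$ is a basis of $\mathcal{P}^q([-1,1])$ and any $f$ orthogonal to $\mathrm{span}\{(x+1)^p\}_{p=1}^q$ must be a multiple of $Q_q$ --- exactly your orthogonal-complement dimension count (and you are right that your first, functional-kernel phrasing was the loose version; the inner-product reformulation you settle on is the rigorous one, and it is the paper's). The forward half is where you genuinely diverge. The paper proves the vanishing from scratch: it writes $P_q(x)+P_{q+1}(x)=\frac{1}{q!2^q}D^q\bigl((x+1)(x^2-1)^q\bigr)$ via Rodrigues' formula, cancels the factor $(x+1)$, and integrates by parts $p$ times --- the boundary terms vanish because $D^l\bigl((x+1)(x^2-1)^q\bigr)$ vanishes at $\pm 1$ for $l<q$, and the integral dies because $D^p(x+1)^{p-1}=0$. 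You instead cancel $(x+1)$ and simply invoke the standard fact that $P_q$ and $P_{q+1}$ are orthogonal to every polynomial of degree at most $q-1$. Your route is shorter and makes the mechanism more transparent, at the cost of citing Legendre orthogonality as known; the paper's computation is self-contained, in effect re-deriving that orthogonality in the precise form needed. Your preliminary observation that the numerator vanishes at $x=-1$ (since $P_q(-1)=(-1)^q$ and $P_{q+1}(-1)=(-1)^{q+1}$), so that $Q_q$ is genuinely a polynomial of degree $q$, is a point the paper leaves implicit and is worth stating.
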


\begin{proof}
	We can write the $q$th-order Legendre polynomial on $[-1,1]$ as
	$P_q(x) = \frac{1}{q!2^q} D^q((x^2-1)^q)$. Thus,
	integrating by parts, we have
	\begin{displaymath}
		\begin{array}{rcl}
			I &=&
			\int_{-1}^1 \frac{P_q(x)+P_{q+1}(x)}{x+1} (x+1)^p \ dx \\
			&=&
			\frac{1}{q! 2^q}
			\int_{-1}^1 D^q ( (x^2-1)^q + x(x^2-1)^q) (x+1)^{p-1} \ dx \\
			&=&
			\frac{1}{q! 2^q}
			\int_{-1}^1 D^q ( (x+1)(x^2-1)^q ) (x+1)^{p-1} \ dx \\
			&=&
			\frac{1}{q! 2^q}
			(-1)^{p}
			\int_{-1}^1 D^{q-p} ( (x+1)(x^2-1)^q ) D^p (x+1)^{p-1} \ dx
			=
			0,
		\end{array}
	\end{displaymath}
	since $D^l((x+1)(x²-1)^q)$ is zero at $-1$ and $1$ for $l<q$.
	Assume now that $f$ is a polynomial of degree $q$ on $[-1,1]$ with the
	property (\ref{eq:radauequation}). Since $\{(x+1)^p\}_{p=1}^q$ are
	linearly independent on $[-1,1]$ and orthogonal to the Radau polynomial
	$Q_q$, $\{Q_q(x),(x+1),(x+1)^2,\ldots,(x+1)^q\}$ form a basis for
	$\mathcal{P}^q([-1,1])$. If then $f$ is orthogonal to the subspace spanned by
	$\{(x+1)^p\}_{p=1}^q$, we must have $f=c Q_q$ for some constant $c$, and
	the proof is complete.
\qquad\end{proof}

\begin{lemma}
	If, for $i=1,\ldots,N$, $f_i=f_i(U(t),t)=f_i(U_i(t),t)$ and $f_i$ is linear or, alternatively,
	$f=f(U(t),t)$ is linear and all components have the same time-steps and order,
	then every component $R_i(U,\cdot)$ of
	the \mdgq\ residual is a Radau polynomial of order $q_{ij}$ on
	$I_{ij}$ for $j=1,\ldots,M_i$.
\end{lemma}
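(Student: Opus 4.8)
The plan is to mirror the proof of the corresponding \mcgq\ statement, replacing the pure integral orthogonality by the \mdgq\ Galerkin orthogonality (\ref{eq:fem,mdg,local,orthogonality}) and replacing the Legendre characterization by the Radau characterization supplied by Lemma \ref{lem:radau}. First I would observe, exactly as in the continuous case, that under the stated hypotheses the residual component $R_i(U,\cdot)$ restricted to $I_{ij}$ is a polynomial of degree $q_{ij}$: since $U_i|_{I_{ij}}\in\mathcal{P}^{q_{ij}}(I_{ij})$, the derivative $\dot{U}_i$ has degree $q_{ij}-1$, while $f_i$ linear and depending only on $U_i$ makes $f_i(U_i,\cdot)$ of degree $q_{ij}$, so $R_i=\dot{U}_i-f_i(U_i,\cdot)$ has degree exactly $q_{ij}$ (the analogous reasoning covers the alternative hypothesis that $f$ is linear with all components sharing time-steps and order).

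Next I would pass to a reference interval, mapping $I_{ij}$ affinely onto $[-1,1]$ so that the left endpoint $t_{i,j-1}$ corresponds to $x=-1$, and denote by $\hat{R}$ the residual in the reference variable. The decisive step is the choice of test functions in (\ref{eq:fem,mdg,local,orthogonality}): I would take $v$ to be the pullbacks of $(x+1)^p$ for $p=1,\ldots,q_{ij}$, each of which lies in $\mathcal{P}^{q_{ij}}(I_{ij})$ as required. The key point is that every such test function vanishes at the left endpoint, since $(x+1)^p$ is zero at $x=-1$ for $p\geq 1$, so the jump contribution $[U_i]_{i,j-1}\,v(t_{i,j-1}^+)$ drops out of the orthogonality. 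What survives, after discarding the harmless Jacobian factor $k_{ij}/2$ from the change of variables, is precisely $\int_{-1}^1 \hat{R}(x)(x+1)^p \ dx = 0$ for $p=1,\ldots,q_{ij}$.

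These are exactly the moment conditions appearing in the converse direction of Lemma \ref{lem:radau}. Since $\hat{R}$ is a polynomial of degree $q_{ij}$ that is orthogonal to $\{(x+1)^p\}_{p=1}^{q_{ij}}$, that lemma forces $\hat{R}=c\,Q_{q_{ij}}$ for some constant $c$; transporting back to $I_{ij}$ then gives the claim. I do not expect a genuine obstacle here: the only thing requiring care is orienting the affine map so that the jump sits at $x=-1$, matching the factor $(x+1)$ around which the Radau characterization of Lemma \ref{lem:radau} is built. Had the jump instead been associated with the right endpoint, the same argument would go through after the time-reversal already used to define the \mdgq\ Radau points, so the orientation is purely a matter of bookkeeping rather than a substantive difficulty.
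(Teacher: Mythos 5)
Your proposal is correct and follows essentially the same route as the paper: the paper's proof likewise takes $v(t)=(t-t_{i,j-1})^p$, $p=1,\ldots,q_{ij}$, as test functions in (\ref{eq:fem,mdg,local,orthogonality}) so that the jump term vanishes, rescales to $[-1,1]$, and invokes the converse direction of Lemma \ref{lem:radau}. Your only (harmless) overstatement is that $R_i$ has degree \emph{exactly} $q_{ij}$; degree at most $q_{ij}$ suffices, since the Radau characterization allows the multiplicative constant to be zero.
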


\begin{proof}
	Note first that by assumption the residual $R_i(U,\cdot)$ is a polynomial of degree
	$q_{ij}$ on $I_{ij}$.
	By the Galerkin orthogonality, we have
	\begin{displaymath}
		0 = \int_{I_{ij}} R_i(U,\cdot) v \ dt + [U_i]_{i,j-1} v(t_{i,j-1}^+)
		\qquad \forall v\in\mathcal{P}^{q_{ij}}(I_{ij}),
	\end{displaymath}
	which holds especially for $v(t) = (t-t_{i,j-1})^p$ with
	$p=1,\ldots,q$, for which the jump terms disappear.
	Rescaling to $[-1,1]$, it follows
	from Lemma \ref{lem:radau} that the residual $R_i(U,\cdot)$ must be a
	Radau polynomial on $I_{ij}$.
\qquad\end{proof}

Also for the discontinuous method there is a reformulation
in terms of approximations of the residual.

\begin{lemma}
	\label{lem:radau,appr}
	Let $\tilde{R}$ be the local $L^2$-projection of the \mdgq\ residual $R$ onto
	the trial space,
	i.e., $\tilde{R}_i(U,\cdot)|_{I_{ij}}$ is the $L^2(I_{ij})$-projection onto $\mathcal{P}^{q_{ij}}(I_{ij})$
	of $R_i(U,\cdot)|_{I_{ij}}$,
	$j=1,\ldots,M_i$, $i=1,\ldots,N$.
	Then every $\tilde{R}_i(U,\cdot)|_{I_{ij}}$ is a Radau polynomial of degree $q_{ij}$.
\end{lemma}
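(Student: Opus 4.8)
The goal is to show that the local $L^2$-projection $\tilde{R}_i(U,\cdot)|_{I_{ij}}$ of the \mdgq\ residual onto $\mathcal{P}^{q_{ij}}(I_{ij})$ is a Radau polynomial of degree $q_{ij}$. The natural route is to mimic exactly the structure of the proof of Lemma \ref{lem:legendre,appr} (the continuous analogue), but replacing the Legendre characterization of Lemma with the Radau characterization established in Lemma \ref{lem:radau}. The key point is that the $L^2$-projection $\tilde{R}_i$ satisfies, by definition, $\int_{I_{ij}} \tilde{R}_i v \ dt = \int_{I_{ij}} R_i v \ dt$ for all $v\in\mathcal{P}^{q_{ij}}(I_{ij})$, so that the orthogonality conditions we can extract about $R_i$ transfer verbatim to $\tilde{R}_i$, which \emph{is} a genuine polynomial of degree $q_{ij}$.

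\textbf{Main steps.} First I would invoke the Galerkin orthogonality \eqref{eq:fem,mdg,local,orthogonality} for the \mdgq\ method, which reads
\begin{displaymath}
  0 = \int_{I_{ij}} R_i(U,\cdot) v \ dt + [U_i]_{i,j-1} v(t_{i,j-1}^+)
  \qquad \forall v\in\mathcal{P}^{q_{ij}}(I_{ij}).
\end{displaymath}
Second, I restrict to the test functions $v(t) = (t-t_{i,j-1})^p$ for $p=1,\ldots,q_{ij}$; for these the boundary factor $v(t_{i,j-1}^+) = 0$ vanishes, so the jump term drops and we obtain $\int_{I_{ij}} R_i(U,\cdot) (t-t_{i,j-1})^p \ dt = 0$. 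Third, using the definition of the $L^2$-projection, these same moment conditions hold with $R_i$ replaced by $\tilde{R}_i$, since each $(t-t_{i,j-1})^p \in \mathcal{P}^{q_{ij}}(I_{ij})$ for $p\le q_{ij}$. Finally, after rescaling the interval $I_{ij}$ affinely to $[-1,1]$ (so that $t_{i,j-1}$ maps to $-1$ and $(t-t_{i,j-1})^p$ becomes a positive multiple of $(x+1)^p$), the polynomial $\tilde{R}_i$ is a degree-$q_{ij}$ polynomial satisfying precisely the orthogonality relation \eqref{eq:radauequation} for $p=1,\ldots,q_{ij}$. By the converse direction of Lemma \ref{lem:radau}, $\tilde{R}_i$ must therefore be a Radau polynomial on $I_{ij}$.

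\textbf{The main obstacle.} There is no serious analytic obstacle here, since the heavy lifting—namely the converse characterization of Radau polynomials by their moments against $\{(x+1)^p\}_{p=1}^q$—is already packaged in Lemma \ref{lem:radau}. The only point requiring care is bookkeeping under the affine rescaling: I must verify that the monomials $(t-t_{i,j-1})^p$ really do transform (up to a positive scalar $(k_{ij}/2)^p$) into $(x+1)^p$, so that the moment conditions land in exactly the form \eqref{eq:radauequation} rather than against some shifted basis. Once this correspondence is checked, the result is immediate, and the proof is essentially a one-line appeal to Lemma \ref{lem:radau} after noting the projection inherits the correct moment orthogonality.
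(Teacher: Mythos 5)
Your proposal is correct and follows essentially the same route as the paper's proof: use the \mdgq\ Galerkin orthogonality with the test functions $v(t)=(t-t_{i,j-1})^p$, $p=1,\ldots,q_{ij}$ (for which the jump term vanishes), transfer the resulting moment conditions to $\tilde{R}_i$ via the defining property of the $L^2$-projection, and conclude by the converse part of Lemma \ref{lem:radau} after rescaling to $[-1,1]$. The paper states this more tersely, but your added care about the affine change of variables sending $(t-t_{i,j-1})^p$ to a positive multiple of $(x+1)^p$ is exactly the bookkeeping implicit in its appeal to rescaling.
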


\begin{proof}
	Since $\tilde{R}_i(U,\cdot)$ is the $L²$-projection of $R_i(U,\cdot)$ onto
	$\mathcal{P}^{q_{ij}}(I_{ij})$ on $I_{ij}$, it follows
	from the Galerkin orthogonality that
	\begin{displaymath}
		\int_{I_{ij}} \tilde{R}_i(U,\cdot) v \ dt =
		\int_{I_{ij}} R_i(U,\cdot) v \ dt = 0
	\end{displaymath}
	for any $v(t) = (t-t_{i,j-1})^p$ with $1\leq p\leq q$.
	From Lemma \ref{lem:radau} it then follows that $\tilde{R}_i(U,\cdot)$
	is a Radau polynomial on $I_{ij}$.
\qquad\end{proof}

We thus know that the \mcgq\ residuals are (in the sense of Lemma \ref{lem:legendre,appr})
Legendre polynomials on the local intervals and that the \mdgq\ residuals
are (in the sense of Lemma \ref{lem:radau,appr}) Radau polynomials. This is illustrated in Figure
\ref{fig:residualproperties}.

\begin{figure}[t]
	\begin{center}
		\leavevmode
		\psfrag{5.5}{}
		\psfrag{t}{$t$}
		\psfrag{r}{$R_i(U,\cdot)$}
    	\includegraphics[width=7cm]{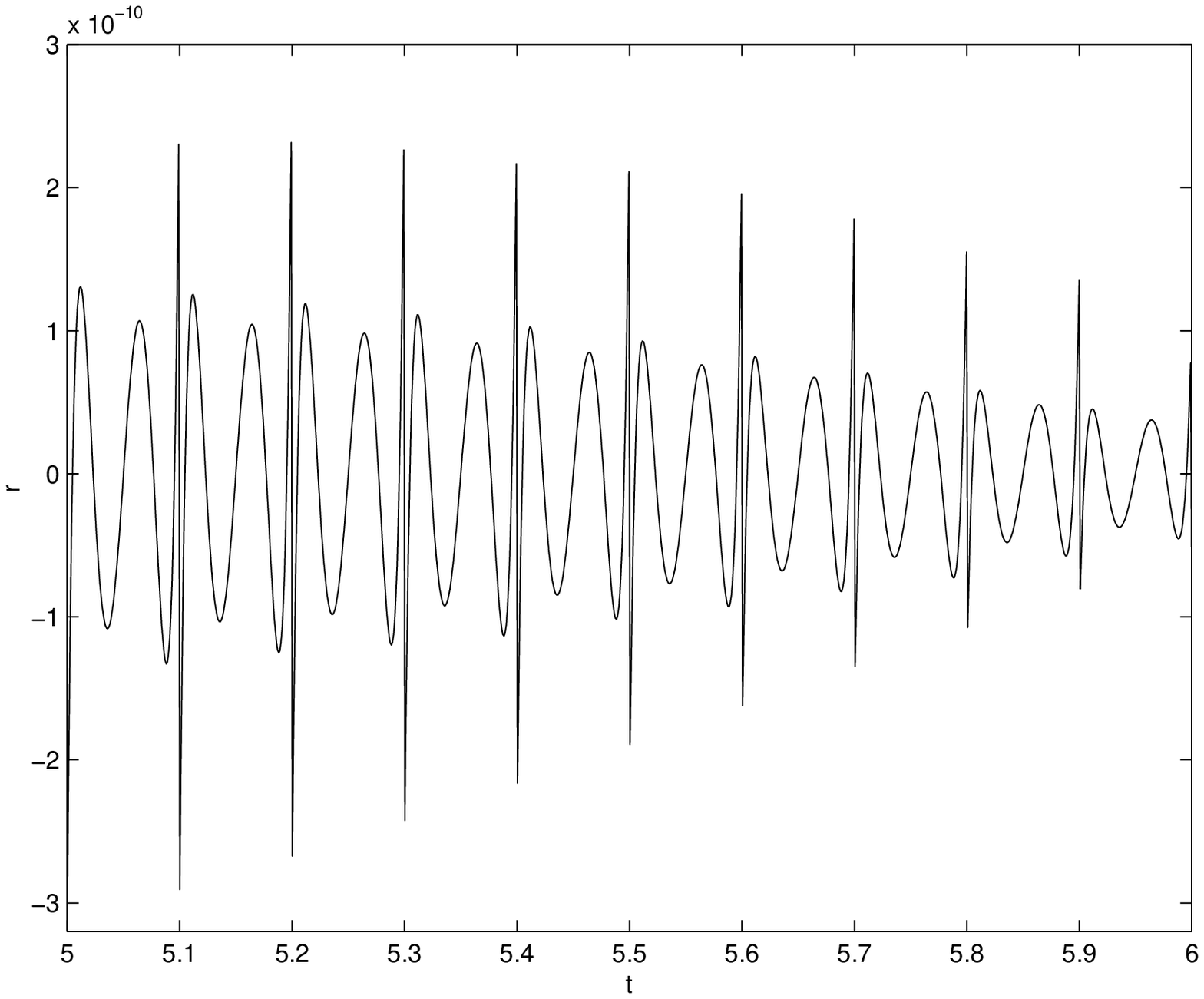}
    	\includegraphics[width=7cm]{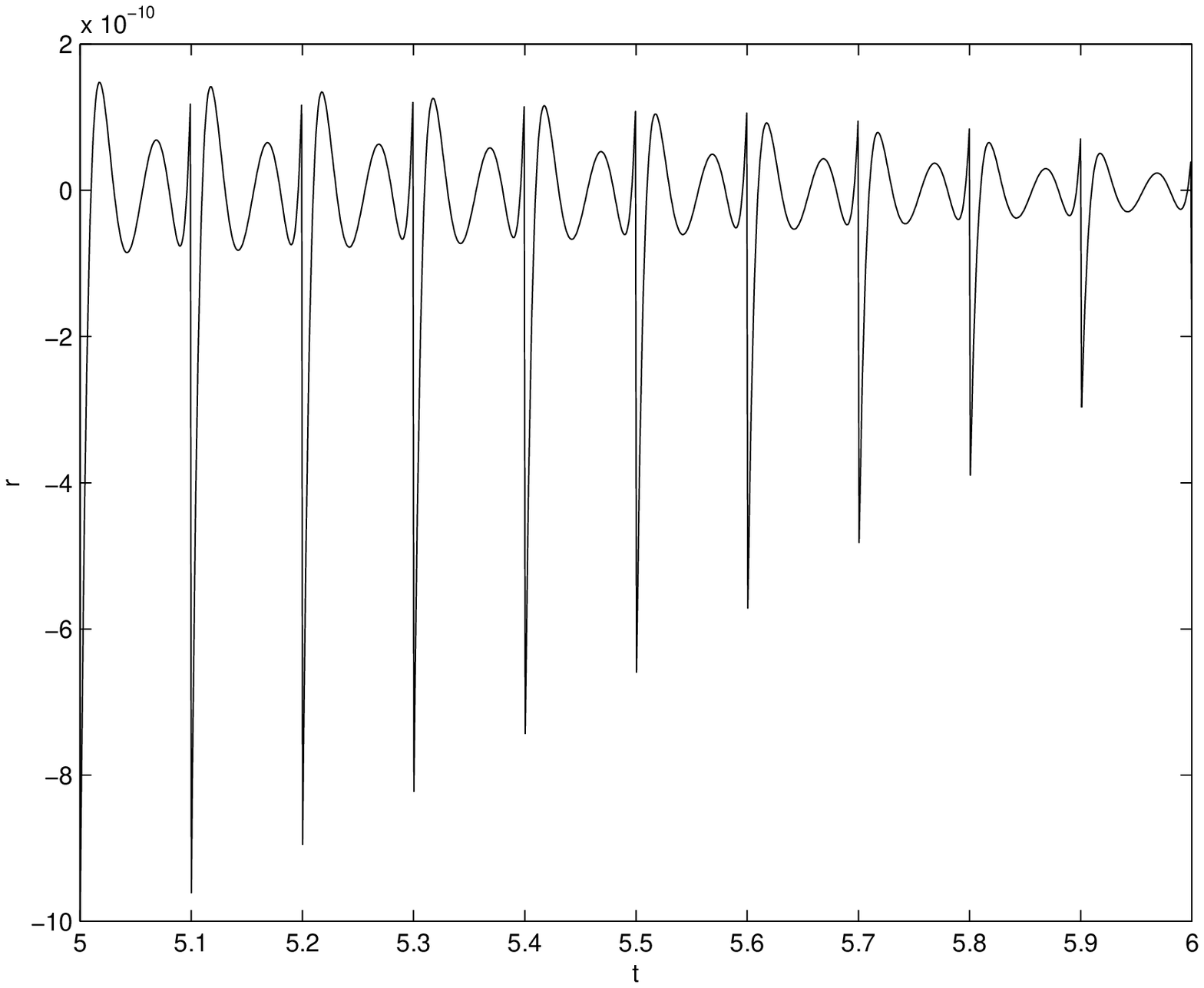}
    	\caption{The Legendre-polynomial residual of the \mcgq\ method (left)
					and the Radau-polynomial residual of the \mdgq\ method (right),
					for polynomials of degree five, i.e., methods of order $10$ and $11$,
					respectively.}
    	\label{fig:residualproperties}
  	\end{center}
\end{figure}

From this information about the residual, we now choose the interpolant.
Assume that the polynomial order of the method on some interval is $q$ for the
continuous method. Then the dual should be interpolated by a polynomial
of degree $q-1$, i.e., we have freedom to interpolate at exactly $q$ points.
Since a $q$th-order Legendre polynomial
has $q$ zeros on the interval, we may choose to interpolate the dual exactly at
those points where the residual is zero. This means that if the dual
can be approximated well enough by a polynomial of degree
$q$, the product $R_i(U,\cdot) (\varphi_i - \pi_k \varphi_i)$ does not change sign on
the interval.

For the discontinuous method, we should interpolate the dual with a polynomial
of degree $q$, i.e., we have freedom to interpolate at exactly $q+1$ points.
To get rid of the jump terms that are present in the error representation for
the discontinuous method, we want to interpolate the dual at the beginning of
every interval. This leaves $q$ degrees of freedom.
We then choose to interpolate the dual at the $q$ points within the interval
where the Radau polynomial is zero.

As a result, we may choose the interpolant in such a way that we have
\begin{equation}
	|L_{\varphi_T,g}(e)| =
	\left|
	\sum_{ij} \int_{I_{ij}} R_i(U,\cdot) (\varphi_i - \pi_k \varphi_i) \ dt
	\right| =
	\sum_{ij} \alpha_{ij}
	\int_{I_{ij}} |R_i(U,\cdot) (\varphi_i - \pi_k \varphi_i)| \ dt,
\end{equation}
with $\alpha_{ij}=\pm 1$,
for both the \mcgq\ method and the \mdgq\ method (but the interpolants are different). Notice that the jump terms
for the discontinuous method have disappeared.

There is now a simple way to compute the integrals
$\int_{I_{ij}} R_i(U,\cdot) (\varphi_i - \pi_k \varphi_i) \ dt$. Since the integrands are,
in principle, products of two polynomials for which we know the positions of the
zeros, the product is a polynomial with known properties.
There are then constants $C_q$ (which can be computed numerically), depending on the order and the method, such
that
\begin{equation}
	\int_{I_{ij}} |R_i(U,\cdot) (\varphi_i - \pi_k \varphi_i)| \ dt =
	C_{q_{ij}} k_{ij} |R_i(U,t_{ij}^-)||\varphi_i(t_{ij})-\pi_k \varphi_i(t_{ij}^-)|.
	\label{eg:errorestimate,computational,1}
\end{equation}

Finally, note that there are ``computational'' counterparts also for the estimates
of type $E_3$ in Theorems \ref{th:aposteriori,cg} and \ref{th:aposteriori,dg},
namely
\begin{equation}
	\begin{array}{rcl}
		|L_{\varphi_T,g}(e)|
		&\leq&
		\sum_{ij}
		\int_{I_{ij}} |R_i(U,\cdot)| |\varphi_i - \pi_k \varphi_i| \ dt \\
		&=&
		\sum_{ij}
			C_{q_{ij}}' k_{ij}^{q_{ij}} |R_i(U,t_{ij}^-)|
		\int_{I_{ij}} \frac{1}{k_{ij}^{q_{ij}}}|\varphi_i - \pi_k \varphi_i| \ dt \\
		&\leq&
		\sum_{i=1}^N
		\tilde{S}_i
		\max_{j=1,\ldots,M_i}
		C_{q_{ij}}' k_{ij}^{q_{ij}} |R_i(U,t_{ij}^-)|,
	\end{array}
	\label{eg:errorestimate,computational,2}
\end{equation}
with $\tilde{S}_i = \int_0^T \frac{1}{k_i^{q_{i}}}|\varphi_i - \pi_k \varphi_i| \ dt$
for the continuous method and similarly for the discontinuous
method.

\subsection{The total error}

The total error is composed of three parts---the Galerkin error,
$E_G$, the computational error, $E_C$
and the quadrature error, $E_Q$:
\begin{equation}
	|L_{\varphi_T,g}(e)| \leq E_G + E_C + E_Q.
	\label{eq:totalerror,general}
\end{equation}
As an example, choosing estimate $E_3$ of Theorems \ref{th:aposteriori,cg}
and \ref{th:aposteriori,dg} we have the following (approximate)
error estimate
for the \mcgq\ method:
\begin{equation}
	|L_{\varphi_T,g}(e)| \leq
   \sum_{i=1}^N
	\left[
   	S^{[q_i]}_i
   	\max_{[0,T]} \left\{ C_{q_{i}-1} k_{i}^{q_{i}} r_{i} \right\} +
    	\bar{S}^{[0]}_i \max_{[0,T]} |\mathcal{R}^{\mathcal{C}}_i| +
    	\bar{S}^{[0]}_i \max_{[0,T]} |\mathcal{R}^{\mathcal{Q}}_i|
	\right];
	\label{eq:totalerror,explicit,cg}
\end{equation}
for the \mdgq\ method we have
\begin{equation}
	|L_{\varphi_T,g}(e)| \leq
   \sum_{i=1}^N
	\left[
   	S^{[q_i+1]}_i
   	\max_{[0,T]} \left\{ C_{q_{i}} k_{i}^{q_{i}+1} \bar{r}_{i} \right\} +
    	\bar{S}^{[0]}_i \max_{[0,T]} |\mathcal{R}^{\mathcal{C}}_i| +
    	\bar{S}^{[0]}_i \max_{[0,T]} |\mathcal{R}^{\mathcal{Q}}_i|
	\right].
	\label{eq:totalerror,explicit,dg}
\end{equation}
These estimates containing Galerkin errors, computational errors, and
quadrature errors also include numerical round-off errors (included in the computational error).
\emph{Modelling errors} could also be similarly accounted for since these are closely related to quadrature errors,
in that both errors can be seen as arising from integrating the wrong right-hand side.

The true global error may thus be estimated in terms of computable stability factors
and residuals.
We expect the estimate for the Galerkin error, $E_G$, to be
quite sharp, while $E_C$ and $E_Q$ may be less sharp.
Even sharper estimates are obtained using estimates $E_0$, $E_1$, or $E_2$ of
Theorems \ref{th:aposteriori,cg} and \ref{th:aposteriori,dg}.

\subsection{An a posteriori error estimate for the dual}

We conclude this section by proving a computable a posteriori error estimate
for the dual problem.
To compute the stability factors used in the error estimates presented above,
we solve the dual problem numerically, and we
thus face the problem of estimating the error in the stability factors.

To demonstrate how relative errors of stability factors can be estimated using
the same technique as above, we compute the relative error for the stability
factor $S_{\varphi}(T)$, defined as
\begin{equation}
	S_{\varphi}(T) = \sup_{\|\varphi(T)\|=1} \int_0^T \|\varphi\| \ dt
	\label{eq:stability,max}
\end{equation}
for a computed approximation $\Phi$ of the dual solution $\varphi$.

To estimate the relative error of the stability factor, we use the error representation
of Theorem \ref{th:errorrepresentation,general} to represent the $L^1([0,T],\real^N)$-error
of $\Phi$ in terms of the residual of $\Phi$ and the dual of the dual, $\omega$.
In \cite{logg:lic:I} we prove the following lemma, from which the estimate follows.

\begin{lemma}
	\label{lem:dualdual}
	Let $\varphi$ be the dual solution with stability factor $S_{\varphi}(t)$,
	i.e., with data $\|\varphi(t)\|=1$ specified at time $t$,
	and
	let $\omega$ be the dual of the dual.
	We then have the following estimate:
	\begin{equation}
		\|\omega(t)\| \leq S_{\varphi}(T-t) \qquad \forall t\in [0,T].
	\end{equation}
\end{lemma}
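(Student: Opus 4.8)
The plan is to run the error-representation machinery of Theorem \ref{th:errorrepresentation,general} a second time, now treating the dual solution $\varphi$ as the \emph{exact} solution of the linear problem (\ref{eq:dual,continuous}) and a computed approximation $\Phi$ as the function whose $L^1([0,T],\real^N)$-error we represent, exactly in the manner of Corollary \ref{cor:L1}. Since (\ref{eq:dual,continuous}) is linear in $\varphi$ with coefficient $J^*(u,U,\cdot)$, forming its dual in the sense of (\ref{eq:J})--(\ref{eq:dual,continuous}) transposes the coefficient back to $(J^*)^*=J(u,U,\cdot)$ and reverses the time orientation. Thus the dual of the dual, $\omega$, solves a linear problem with coefficient $J(u,U,\cdot)$, carries vanishing data at a synchronized endpoint (say $\omega(T)=0$), and is driven by the forcing $g=e/\|e\|$ supplied by Corollary \ref{cor:L1} with $e=\Phi-\varphi$, which has unit Euclidean norm at every $t$.

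First I would make the two time reversals explicit, namely that the dual (\ref{eq:dual,continuous}) runs backward from $T$ while its dual runs forward again, so that Corollary \ref{cor:L1} applies verbatim and the orientation of $\omega$ is fixed. Writing $\omega$ through variation of constants against the propagator $P(\cdot,\cdot)$ of the homogeneous $\omega$-equation, the vanishing endpoint data annihilates the homogeneous contribution and leaves $\omega(t)$ as an integral of $P$ against $g$ over the interval of length $T-t$ on which $\omega$ accumulates its forcing.

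The decisive step is to avoid the lossy estimate $\sup\int\le\int\sup$ and instead write
\begin{displaymath}
  \|\omega(t)\| = \sup_{\|z\|=1} (z,\omega(t))
  = \sup_{\|z\|=1} \int_t^T (P(t,\sigma)^* z,\, g(\sigma)) \ d\sigma
  \leq \sup_{\|z\|=1} \int_t^T \|P(t,\sigma)^* z\| \ d\sigma,
\end{displaymath}
where I have used $\|g\|\equiv 1$. The point is that for each fixed unit vector $z$ the family $\sigma\mapsto P(t,\sigma)^* z$ is itself a solution of the homogeneous dual equation $-\dot\psi = J^*\psi$ with unit data at the synchronized endpoint of the window; this follows from the standard identity $\partial_\sigma P(t,\sigma) = -P(t,\sigma) J(\sigma)$ on taking transposes. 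Hence the supremum over $z$ is \emph{exactly} the supremum over unit dual data in the definition (\ref{eq:stability,max}) of the stability factor, now taken over a window of length $T-t$, and the right-hand side equals $S_{\varphi}(T-t)$, which proves the claim.

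I expect the main obstacle to be the time-reversal bookkeeping: one must track both reversals carefully so that $\omega(t)$ genuinely integrates its unit-norm forcing over the complementary interval of length $T-t$, and one must verify that the family $P(t,\cdot)^*z$ is correctly normalized and oriented so that the supremum over $z$ reproduces precisely the supremum defining $S_{\varphi}(T-t)$ rather than a stability factor over some other window. Once this identification of the two suprema is in place the estimate is immediate, since the normalization $\|g\|\equiv 1$ reduces everything to the propagator-norm supremum that defines the stability factor.
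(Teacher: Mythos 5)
Your argument is sound, but there is a caveat about the comparison you were asked for: the paper does not actually prove Lemma \ref{lem:dualdual} in the text---it explicitly defers the proof to \cite{logg:lic:I}---so your proposal can only be judged on its own merits, and on those it stands. The setup is right: since (\ref{eq:dual,continuous}) is linear, its dual has coefficient $(J^*)^*=J(u,U,\cdot)$ with reversed orientation, and with the data of Corollary \ref{cor:L1} ($\varphi_T=0$, $g=e/\|e\|$, $\|g\|\equiv 1$) the Duhamel formula annihilates the homogeneous part and leaves $\omega(t)=\int_t^T \Psi(t,\sigma)g(\sigma)\,d\sigma$. Your decisive step is also the right one: the crude bound $\int_t^T\|\Psi(t,\sigma)g(\sigma)\|\,d\sigma$ involves propagator values at the single time $t$ for a $\sigma$-varying family of data and cannot be identified with the $L^1$-in-time quantity (\ref{eq:stability,max}), whereas dualizing through $\|\omega(t)\|=\sup_{\|z\|=1}(z,\omega(t))$ and transposing the propagator produces, for each fixed $z$, one trajectory $\sigma\mapsto \Psi(t,\sigma)^*z$ of a homogeneous equation with unit data at $\sigma=t$, whose $L^1$ norm over the window is exactly what the stability factor measures. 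The only place where your sketch is loose is the sign and orientation bookkeeping you yourself flag; to pin it down: with $\omega$ posed in the reversed variable $s$ (consistent with the appearance of $\omega(T-\cdot)$ in the error representation), the dual of the dual reads $-\dot{\omega}(s)=J(u,U,T-s)\,\omega(s)+g(s)$, $\omega(T)=0$, the transposed family satisfies $\dot{\psi}(\sigma)=J^*(u,U,T-\sigma)\,\psi(\sigma)$, $\psi(t)=z$, and the substitution $\tau=T-\sigma$ gives
\begin{displaymath}
	-\dot{\tilde{\psi}}(\tau) = J^*(u,U,\tau)\,\tilde{\psi}(\tau), \qquad \tau\in[0,T-t], \quad \|\tilde{\psi}(T-t)\|=1,
\end{displaymath}
which is precisely the homogeneous dual problem with unit data specified at time $T-t$, so that $\sup_{\|z\|=1}\int_t^T\|\Psi(t,\sigma)^*z\|\,d\sigma \leq S_{\varphi}(T-t)$ by the lemma's own convention for $S_{\varphi}$. (Your stated identity $\partial_\sigma P(t,\sigma)=-P(t,\sigma)J(\sigma)$ holds under the forward convention $\dot{\omega}=J\omega$ and flips sign under the backward one; either way the transposed family solves the dual equation in the appropriate orientation, so nothing breaks.) One trivial loose end: $g=e/\|e\|$ is undefined where $e$ vanishes, but any unit vector there, or simply $\|g\|\leq 1$, suffices for the estimate.
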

\unskip

\begin{theorem}
	Let $\Phi$ be a continuous approximation of the dual solution
	with residual $R_{\Phi}$, and assume that
	$S_{\varphi}(t)/S_{\varphi}(T)$ is bounded by $C$ on $[0,T]$.
	Then the following estimate holds for the relative error of the stability factor
	$S_{\Phi}(T)$:
 	\begin{equation}
		|S_{\Phi}(T)-S_{\varphi}(T)|/S_{\varphi}(T) \leq
		C \int_0^T \| R_{\Phi} \| \ dt,
	\end{equation}
	and for many problems we may take $C=1$.
\end{theorem}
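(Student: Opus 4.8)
The plan is to reduce the relative-error estimate to an $L^1$-error estimate for the computed dual $\Phi$, and then to apply the error representation of Theorem~\ref{th:errorrepresentation,general} with the roles of exact and approximate solution played by $\varphi$ and $\Phi$. First I would observe that, by the definition (\ref{eq:stability,max}) and the reverse triangle inequality $\big|\,\|\Phi\|-\|\varphi\|\,\big|\le\|\Phi-\varphi\|$, the numerator is controlled by the $L^1$-error of the dual approximation,
\begin{displaymath}
  |S_\Phi(T)-S_\varphi(T)|\le\int_0^T\|\Phi-\varphi\|\,dt=\|e\|_{L^1([0,T],\real^N)},
\end{displaymath}
where $e=\Phi-\varphi$. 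The supremum in (\ref{eq:stability,max}) is handled by fixing a common terminal datum for $\varphi$ and $\Phi$ and using $|\sup_a F-\sup_a G|\le\sup_a|F-G|$, so the bound obtained for a fixed datum passes to the supremum.

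Next I would estimate $\|e\|_{L^1}$ using Corollary~\ref{cor:L1}. Since $\varphi$ itself solves an ODE of the form (\ref{eq:u'=f})---the linearized dual problem---Theorem~\ref{th:errorrepresentation,general} applies verbatim to the pair $(\Phi,\varphi)$, with the associated linearized dual problem now being the \emph{dual of the dual}, whose solution is the $\omega$ of Lemma~\ref{lem:dualdual}. Choosing $\varphi_T=0$ and $g=e/\|e\|$ as in Corollary~\ref{cor:L1} gives
\begin{displaymath}
  \|e\|_{L^1([0,T],\real^N)}
  = \sum_{i,j}\left[\int_{I_{ij}} R_{\Phi,i}\,\omega_i\,dt + [\Phi_i]_{i,j-1}\,\omega_i(t_{i,j-1})\right].
\end{displaymath}
Because $\Phi$ is assumed continuous, every jump $[\Phi_i]_{i,j-1}$ vanishes, and the sum collapses to $\int_0^T(R_\Phi,\omega)\,dt$.

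From here the argument is a short chain of inequalities. By the pointwise Cauchy inequality in $\real^N$ followed by Lemma~\ref{lem:dualdual},
\begin{displaymath}
  \|e\|_{L^1([0,T],\real^N)}
  = \int_0^T (R_\Phi,\omega)\,dt
  \le \int_0^T \|R_\Phi\|\,\|\omega\|\,dt
  \le \int_0^T \|R_\Phi\|\, S_\varphi(T-t)\,dt.
\end{displaymath}
Invoking the hypothesis $S_\varphi(t)/S_\varphi(T)\le C$ on $[0,T]$, so that $S_\varphi(T-t)\le C\,S_\varphi(T)$ for $t\in[0,T]$, and pulling the constant out of the integral yields $\|e\|_{L^1}\le C\,S_\varphi(T)\int_0^T\|R_\Phi\|\,dt$. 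Combining with the first step and dividing by $S_\varphi(T)$ gives the claimed bound; the remark that $C=1$ suffices for many problems reflects the fact that $S_\varphi$ is typically monotone in $t$, whence $S_\varphi(T-t)\le S_\varphi(T)$ directly.

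The main obstacle I anticipate is conceptual rather than computational: making precise the sense in which the error representation applies to the dual problem itself, i.e.\ correctly identifying $\omega$ as the dual of the dual and verifying that it inherits the regularity required in section~\ref{sec:dual} so that Corollary~\ref{cor:L1} is legitimate. One must also confirm that the supremum defining $S_\varphi(T)$ does not spoil the reverse-triangle-inequality step; fixing a common terminal datum for $\varphi$ and $\Phi$ and noting that the resulting bound is uniform in that datum resolves this. Everything after the application of Lemma~\ref{lem:dualdual} is routine.
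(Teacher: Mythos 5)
Your proposal is correct and follows essentially the same route as the paper's proof: reduce the difference of stability factors to the $L^1([0,T],\real^N)$-error of $\Phi$ via the triangle inequality, represent that error by Corollary~\ref{cor:L1} with the dual of the dual $\omega$, and conclude with Cauchy's inequality, Lemma~\ref{lem:dualdual}, and the hypothesis $S_{\varphi}(t)\leq C\,S_{\varphi}(T)$. Your two added precautions---handling the supremum in (\ref{eq:stability,max}) by fixing a common terminal datum, and noting that the jump terms vanish because $\Phi$ is continuous---are points the paper leaves implicit, not a different argument.
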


\begin{proof}
	By Corollary \ref{cor:L1},
	we have an expression for the $L^1([0,T],\real^N)$-error of the dual,
	so that
	\begin{equation}
		\begin{array}{rcl}
			|S_{\Phi}(T)-S_{\varphi}(T)|
			&=&
			\left|
				\int_0^T \|\Phi\| \ dt - \int_0^T \|\varphi\| \ dt
			\right| \\
			&=&
			\left|
				\int_0^T ( \|\Phi\| - \|\varphi\| ) \ dt
			\right|
			\leq
			\int_0^T \|\Phi - \varphi\| \ dt \\
			&=&
			\|\Phi - \varphi\|_{L^1([0,T],\real^n)}
			=
			\int_0^T (R_{\Phi},\omega(T-\cdot)) \ dt \\
			&\leq&
			\int_0^T \|R_{\Phi}\| \|\omega(T-\cdot)\| \ dt. \\
		\end{array}
	\end{equation}
	With $C$ defined as above it now follows by Lemma \ref{lem:dualdual}
	that
	\begin{displaymath}
		|S_{\Phi}(T)-S_{\varphi}(T)|	\leq
		C \int_0^T \|R_{\Phi}\| \ dt \ S_{\varphi}(T),
	\end{displaymath}
	and the proof is complete.
\qquad\end{proof}

\begin{remark}\rm
	We also have to take into account quadrature errors
	when evaluating (\ref{eq:stability,max}).
	This can be done in many ways; see, e.g., \cite{CDE}.
\end{remark}

\appendix

\section{Derivation of the methods}

This section contains some details
left out of the discussion of section \ref{sec:method}.

\subsection{The \protect\boldmath\mcgq\ method}

To rewrite the local problem in a more explicit form, let
$\{s_n\}_{n=0}^{q}$ be a set of nodal points on $[0,1]$, with $s_0=0$
and $s_q=1$. A good choice for the \cgq\ method is the
Lobatto points of $[0,1]$.
Now, let $\tau_{ij}$ be the linear mapping from the
interval $I_{ij}$ to $(0,1]$, defined by
\begin{equation}
  \tau_{ij}(t) = \frac{t-t_{i,j-1}}{t_{ij}-t_{i,j-1}},
  \label{eq:tau}
\end{equation}
and let $\{\lambda_n^{[q]}\}_{n=0}^{q}$ be the $\{s_n\}_{n=0}^{q}$
Lagrange basis functions for $\mathcal{P}^q([0,1])$ on $[0,1]$, i.e.,
\begin{equation}
  \lambda_{n}^{[q]}(s) = \frac
  { (s-s_0)   \cdots (s-s_{n-1})   (s-s_{n+1})   \cdots(s-s_{q}) }
  { (s_n-s_0) \cdots (s_n-s_{n-1}) (s_n-s_{n+1}) \cdots(s_n-s_q) }.
  \label{eq:lagrange}
\end{equation}
We can then express $U_i$ on $I_{ij}$ in the form
\begin{equation}
  U_i(t) = \sum_{n=0}^q \xi_{ijn} \lambda^{[q_{ij}]}_n(\tau_{ij}(t)),
  \label{eq:ansatz,cg}
\end{equation}
and choosing the $\lambda_m^{[q-1]}$ as test functions we can formulate
the local problem
(\ref{eq:fem,mcg,local}) as follows:
Find $\{\xi_{ijn}\}_{n=0}^{q_{ij}}$, with $\xi_{ij0}=\xi_{i,j-1,q_{i,j-1}}$, such that
for $m=0,\ldots,q_{ij}-1$
\begin{equation}
  \int_{I_{ij}}
  \sum_{n=0}^{q_{ij}} \xi_{ijn} \frac{d}{dt}
  \left[ \lambda^{[q_{ij}]}_n(\tau_{ij}(t)) \right]
  \lambda^{[q_{ij}-1]}_m(\tau_{ij}(t)) \ dt =
  \int_{I_{ij}} f_i(U(t),t) \lambda^{[q_{ij}-1]}_m(\tau_{ij}(t)) \ dt.
  \label{eq:fem,xi,cg}
\end{equation}
To simplify the notation, we drop the $_{ij}$ subindices and
assume that the time-interval is $[0,k]$, keeping in mind that, although not visible,
all other components are present in $f$. We thus seek to determine the
coefficients $\{\xi_n\}_{n=1}^{q}$ with $\xi_0$ given, such that for $m=1,\ldots,q$ we have
\begin{equation}
  \sum_{n=0}^q \xi_n
  \frac{1}{k}
  \int_0^k
  \dot{\lambda}^{[q]}_n(\tau(t))
  \lambda^{[q-1]}_{m-1}(\tau(t)) \ dt =
  \int_0^k
  f \lambda^{[q-1]}_{m-1}(\tau(t)) \ dt,
  \label{eq:fem,xi,cg,simple}
\end{equation}
or simply
\begin{equation}
  \sum_{n=1}^q a^{[q]}_{mn} \xi_n = b_m,
  \label{eq:fem,xi,cg,simpler}
\end{equation}
where
\begin{equation}
  a^{[q]}_{mn} =
  \int_0^1
  \dot{\lambda}^{[q]}_n(t)
  \lambda^{[q-1]}_{m-1}(t) \ dt
  \label{eq:a,mcg}
\end{equation}
and
\begin{equation}
	b_m = \int_0^k
  f \lambda^{[q-1]}_{m-1}(\tau(t)) \ dt -
  a_{m0} \xi_0.
\end{equation}

We explicitly compute the inverse
$\bar{A}^{[q]}=(\bar{a}^{[q]}_{mn})$
of the matrix $A^{[q]}=(a^{[q]}_{mn})$.
Thus, switching back to the full notation, we get
\begin{equation}
  \xi_{ijm} =
  -\xi_0 \sum_{n=1}^q \bar{a}_{mn}^{[q]} a_{n0} +
  \int_{I_{ij}} w_m^{[q_{ij}]}(\tau_{ij}(t)) \ f_i(U(t),t) \ dt, \qquad m = 1,\ldots,q_{ij},
  \label{eq:mcg,xi,expl,first}
\end{equation}
where the \emph{weight functions} $\{w_{m}^{[q]}\}_{m=1}^q$ are
given by
\begin{equation}
  w_m^{[q]} =
  \sum_{n=1}^q \bar{a}_{mn}^{[q]}
  \lambda_{n-1}^{[q-1]}, \qquad m = 1,\ldots,q.
  \label{eq:mcg,weights}
\end{equation}
Following Lemma \ref{lem:simpl,cg} below, this relation may be somewhat simplified.

\begin{lemma}
	\label{lem:simpl,cg}
	For the \mcgq\ method, we have
  \begin{displaymath}
    \sum_{n=1}^q \bar{a}_{mn}^{[q]} a_{n0} = -1.
  \end{displaymath}
\end{lemma}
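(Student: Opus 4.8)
The plan is to exploit the partition-of-unity property of the Lagrange basis together with the defining relation that $\bar{A}^{[q]}$ is the inverse of $A^{[q]}$. The key observation is that the column $(a_{n0})_{n=1}^q$ which multiplies $\xi_0$ in (\ref{eq:fem,xi,cg,simpler}) is not independent of the matrix $A^{[q]}$: it is precisely the negative of the row-sum of $A^{[q]}$. Once this is established, the claim reduces to a one-line manipulation using $\bar{A}^{[q]} A^{[q]} = I$.

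First I would recall that the Lagrange basis $\{\lambda_n^{[q]}\}_{n=0}^q$ associated with the nodal points $\{s_n\}_{n=0}^q$ reproduces constants, so that $\sum_{n=0}^q \lambda_n^{[q]}(t) = 1$ for all $t\in[0,1]$. Differentiating this identity gives $\sum_{n=0}^q \dot{\lambda}_n^{[q]}(t) = 0$, and hence $\dot{\lambda}_0^{[q]} = -\sum_{n=1}^q \dot{\lambda}_n^{[q]}$. Multiplying by the test function $\lambda_{m-1}^{[q-1]}$, integrating over $[0,1]$, and using the definition (\ref{eq:a,mcg}) of the entries $a_{mn}^{[q]}$ (and the analogous expression for $a_{m0}$), I would obtain
\begin{displaymath}
  a_{m0} = \int_0^1 \dot{\lambda}_0^{[q]}\,\lambda_{m-1}^{[q-1]}\ dt
         = -\sum_{n=1}^q \int_0^1 \dot{\lambda}_n^{[q]}\,\lambda_{m-1}^{[q-1]}\ dt
         = -\sum_{n=1}^q a_{mn}^{[q]}.
\end{displaymath}
That is, the $\xi_0$-column equals minus the sum of the columns of $A^{[q]}$.

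Finally I would substitute this relation (with the row index renamed to $n$, so that $a_{n0} = -\sum_{k=1}^q a_{nk}^{[q]}$) into the quantity of interest and invoke the inverse relation $\sum_{n=1}^q \bar{a}_{mn}^{[q]} a_{nk}^{[q]} = \delta_{mk}$:
\begin{displaymath}
  \sum_{n=1}^q \bar{a}_{mn}^{[q]}\,a_{n0}
  = -\sum_{k=1}^q \sum_{n=1}^q \bar{a}_{mn}^{[q]}\,a_{nk}^{[q]}
  = -\sum_{k=1}^q \delta_{mk} = -1,
\end{displaymath}
since $m$ ranges over $\{1,\ldots,q\}$, so exactly one Kronecker term contributes. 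I do not expect a genuine obstacle here: the computation is short and elementary. The only points requiring care are the index shift between the $q$th-degree trial basis and the $(q-1)$th-degree test basis, and recognizing that the partition-of-unity identity is exactly what ties the extra $\xi_0$-column back to $A^{[q]}$ itself; this recognition is the one nonroutine step.
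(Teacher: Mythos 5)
Your proof is correct, and it takes a genuinely different route from the paper's. You argue at the level of the matrices: the partition-of-unity identity $\sum_{n=0}^q \lambda_n^{[q]}(t) \equiv 1$ gives $\sum_{n=0}^q \dot{\lambda}_n^{[q]} \equiv 0$, hence by (\ref{eq:a,mcg}) (extended to $n=0$, exactly as it enters $b_m$) the zeroth column satisfies $a_{m0} = -\sum_{n=1}^q a_{mn}^{[q]}$, and the conclusion drops out of $\bar{A}^{[q]}A^{[q]} = I$ by summing Kronecker deltas; your index bookkeeping between the degree-$q$ trial basis and the degree-$(q-1)$ test basis is sound. The paper instead argues variationally: since the value of the sum is independent of $f$, it sets $f=0$, tests the Galerkin orthogonality (\ref{eq:fem,mcg,local,orthogonality}) with $v=\dot{U}$ to get $\int_0^1 (\dot{U})^2\, dt = 0$, concludes that $U$ is constant so $\xi_m=\xi_0$ for all $m$ (by nodality of the basis), and then reads off the value $-1$ from (\ref{eq:mcg,xi,expl,first}). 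Both proofs encode the same underlying fact---the scheme reproduces constants exactly---but yours makes the mechanism explicit through the Lagrange basis and stays entirely within elementary linear algebra, with the added benefit that it transfers essentially verbatim to the \mdgq\ analogue: summing (\ref{eq:a,mdg}) over $n=0,\ldots,q$ gives $\sum_{n=0}^q a_{mn}^{[q]} = \lambda_m^{[q]}(0)$, whence $\sum_{n=0}^q \bar{a}_{mn}^{[q]}\lambda_n^{[q]}(0) = 1$ follows at once, replacing the paper's separate jump-term argument for that lemma. The paper's argument, in turn, is basis-independent: it uses no property of the $\lambda_n^{[q]}$ beyond nodality, and exhibits directly that the identity expresses exactness of the method for constant solutions.
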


\begin{proof}
  Assume the interval to be $[0,1]$.
  The value is independent of $f$ so we may take $f=0$. We thus want
  to prove that if $f=0$, then $\xi_n=\xi_0$ for $n=1,\ldots,q$, i.e.,
  $U=U_0$ on $[0,1]$ since $\{\lambda_n^{[q]}\}_{n=0}^{q}$ is a nodal
  basis for $\mathcal{P}^q([0,1])$.
  Going back to the Galerkin orthogonality
  (\ref{eq:fem,mcg,local,orthogonality}), this amounts to showing that if
  \begin{displaymath}
    \int_0^1 \dot{U} v \ dt = 0 \qquad \forall v\in\mathcal{P}^{q-1}([0,1]),
  \end{displaymath}
  with $U\in\mathcal{P}^q([0,1])$, then $U$ is constant on $[0,1]$.
  This follows by taking $v=\dot{U}$. Thus,
  $\xi_n=\xi_0$ for $n=1,\ldots,q$, so that
  the value of $\sum_{n=1}^q \bar{a}_{mn}^{[q]} a_{n0}$ must be $-1$.
  This completes the proof.
\qquad\end{proof}

The \mcgq\ method thus reads as follows:
For every local interval $I_{ij}$, find $\{\xi_{ijn}\}_{n=0}^{q_{ij}}$, with $\xi_{ij0}=\xi_{i,j-1,q_{i,j-1}}$, such that
\begin{equation}
  \xi_{ijm} =
  \xi_{ij0} +
  \int_{I_{ij}} w_m^{[q_{ij}]}(\tau_{ij}(t)) \ f_i(U(t),t) \ dt,
	\qquad m=1,\ldots,q_{ij},
  \label{eq:mcg,xi,expl,app}
\end{equation}
for certain weight functions
$\{w_{n}^{[q]}\}_{m=1}^q\subset\mathcal{P}^{q-1}(0,1)$, and where the initial condition is
specified by
$\xi_{i00} = u_i(0)$ for $i=1,\ldots,N$.

The weight functions may be computed analytically for small $q$, and
for general $q$ they are easy to compute numerically.

\subsection{The \protect\boldmath\mdgq\ method}

We now make the same ansatz as for the continuous method,
\begin{equation}
  U_i(t) = \sum_{n=0}^q \xi_{ijn} \lambda^{[q_{ij}]}_n(\tau_{ij}(t)),
  \label{eq:ansatz,dg}
\end{equation}
where the difference is that we now have $q+1$ degrees of freedom on
every interval, since we no longer have the continuity requirement for
the trial functions. We make the assumption that the nodal points for
the nodal basis functions are chosen so that
\begin{equation}
  s_q = 1,
  \label{eq:nodes,condition,dg}
\end{equation}
i.e., the end-point of every subinterval is a nodal point for the
basis functions.

With this ansatz, we get the following set of
equations for determining $\{\xi_{ijn}\}_{n=0}^{q_{ij}}$:
\begin{eqnarray}
\\
  \left(\sum_{n=0}^{q_{ij}}
  \xi_{ijn} \lambda^{[q_{ij}]}_n(0) - \xi_{ij0}^- \right) \lambda^{[q_{ij}]}_m(0) +
  \int_{I_{ij}}
  \sum_{n=0}^{q_{ij}} \xi_{ijn} \frac{d}{dt}
  \left[ \lambda^{[q_{ij}]}_n(\tau_{ij}(t)) \right]
  \lambda^{[q_{ij}]}_m(\tau_{ij}(t)) \ dt \nonumber\\ =
  \int_{I_{ij}} f_i(U(t),t) \lambda^{[q_{ij}]}_m(\tau_{ij}(t)) \ dt
  \label{eq:fem,xi,dg}
\end{eqnarray}
for $m=0,\ldots,q_{ij}$,
where we use $\xi_{ij0}^-$ to denote $\xi_{i,j-1,q_{i,j-1}}$, i.e., the value
at the right end-point of the previous interval.
To simplify the notation, we drop the subindices $_{ij}$ again and
rewrite to $[0,k]$.
We thus seek to determine the
coefficients $\{\xi_n\}_{n=0}^{q}$ such that for $m=0,\ldots,q$ we have
\begin{eqnarray}
\\
  \left( \sum_{n=0}^{q}
    \xi_n \lambda^{[q]}_n(0) - \xi_0^-
  \right) \lambda^{[q]}_m(0) +
  \sum_{n=0}^q \xi_n
  \frac{1}{k}
  \int_0^k
  \dot{\lambda}^{[q]}_n(\tau(t))
  \lambda^{[q]}_{m}(\tau(t)) \ dt =
  \int_0^k
  f \lambda^{[q]}_{m}(\tau(t)) \ dt,\nonumber
  \label{eq:fem,xi,dg,simple}
\end{eqnarray}
or simply
\begin{equation}
  \sum_{n=0}^q a^{[q]}_{mn} \xi_n = b_m,
  \label{eq:fem,xi,dg,simpler}
\end{equation}
where
\begin{equation}
  a^{[q]}_{mn} =
  \int_0^1
  \dot{\lambda}^{[q]}_n(t)
  \lambda^{[q]}_{m}(t) \ dt +
  \lambda^{[q]}_n(0) \lambda^{[q]}_m(0)
  \label{eq:a,mdg}
\end{equation}
and
\begin{equation}
  b^{[q]}_m = \int_0^k f \lambda^{[q]}_m(\tau(t)) \ dt +
  \xi_0^- \lambda_m^{[q]}(0).
  \label{eq:b,mdg}
\end{equation}

Now, let $A^{[q]}$ be the $(q+1)\times(q+1)$
matrix $A^{[q]}=( a^{[q]}_{mn})$ with inverse
$\bar{A}^{[q]}=( \bar{a}^{[q]}_{mn})$. Then, switching
back to the full notation, we have
\begin{equation}
  \xi_{ijm} =
  \xi_{ij0}^- \sum_{n=0}^q \bar{a}_{mn}^{[q]} \lambda_n^{[q]}(0) +
  \int_{I_{ij}} w_m^{[q_{ij}]}(\tau_{ij}(t)) \ f_i(U(t),t) \ dt, \qquad m = 0,\ldots,q_{ij},
  \label{eq:mdg,xi,expl,first}
\end{equation}
where the \emph{weight functions} $\{w_{n}^{[q]}\}_{n=0}^q$ are
given by
\begin{equation}
  w_m^{[q]} =
  \sum_{n=0}^q \bar{a}_{mn}^{[q]}
  \lambda_{n}^{[q]}, \qquad m = 0,\ldots,q.
  \label{eq:mdg,weights}
\end{equation}
As for the continuous method, this may be somewhat simplified.

\begin{lemma}
  For the \mdgq\ method, we have
  \begin{displaymath}
    \sum_{n=0}^q \bar{a}_{mn}^{[q]} \lambda_n^{[q]}(0) = 1.
  \end{displaymath}
\end{lemma}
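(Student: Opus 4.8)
The plan is to follow the template of Lemma~\ref{lem:simpl,cg}. The quantity $\sum_{n=0}^q \bar a_{mn}^{[q]}\lambda_n^{[q]}(0)$ is purely a property of the matrix $A^{[q]}$ and its inverse, independent of $f$, so I would specialize to $f\equiv 0$. Setting $f=0$ in the explicit form (\ref{eq:mdg,xi,expl,first}) annihilates the integral term and leaves $\xi_m = \xi_0^-\sum_{n=0}^q \bar a_{mn}^{[q]}\lambda_n^{[q]}(0)$ for $m=0,\dots,q$. Since $\xi_0^-$ is an arbitrary (say nonzero) incoming value, the asserted identity is exactly equivalent to the statement that, when $f=0$, the local \mdgq\ solution reduces to the constant $U\equiv\xi_0^-$.

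To prove that reduction I would return to the Galerkin orthogonality (\ref{eq:fem,mdg,local,orthogonality}). After rescaling to $[0,1]$ and with $f=0$ (so $R=\dot U$), it reads $(U(0^+)-\xi_0^-)\,v(0)+\int_0^1 \dot U\,v\,dt=0$ for all $v\in\mathcal{P}^q([0,1])$. Writing $W=U-\xi_0^-\in\mathcal{P}^q$, this becomes $W(0^+)\,v(0)+\int_0^1 \dot W\,v\,dt=0$. The decisive test is $v=W$ itself, which is admissible because for \mdgq\ the test space equals the trial space; using $\int_0^1 \dot W\,W\,dt=\frac12(W(1)^2-W(0^+)^2)$ collapses the identity to $\frac12 W(0^+)^2+\frac12 W(1)^2=0$, forcing $W(0^+)=0$. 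Once the jump is gone, the orthogonality reduces to $\int_0^1 \dot W\,v\,dt=0$ for all $v\in\mathcal{P}^q$; taking $v=\dot W\in\mathcal{P}^{q-1}$ gives $\dot W\equiv 0$, so $W$ is constant and equal to $W(0^+)=0$, i.e.\ $U\equiv\xi_0^-$, as required.

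I expect the main obstacle to be handling the jump term, which is precisely what distinguishes this from the continuous case: in Lemma~\ref{lem:simpl,cg} continuity pins down the left endpoint for free and one immediately tests with $\dot U$, whereas here $U(0^+)$ is a genuine degree of freedom and one cannot conclude $\dot U=0$ until the jump has been shown to vanish. The two-stage testing (first $v=W$ to kill the jump, then $v=\dot W$ to kill the derivative) is the natural way around this, and it crucially uses that the \mdgq\ test space is the full $\mathcal{P}^q$ rather than $\mathcal{P}^{q-1}$.

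As a cleaner alternative I would verify the identity directly at the matrix level. The claim $\sum_{n}\bar a_{mn}^{[q]}\lambda_n^{[q]}(0)=1$ is equivalent to $A^{[q]}\mathbf 1=\ell$, where $\mathbf 1=(1,\dots,1)^*$ is the all-ones vector and $\ell=(\lambda_n^{[q]}(0))_{n=0}^q$. Summing (\ref{eq:a,mdg}) over $n$ and using that the Lagrange basis is a partition of unity, so that $\sum_n\lambda_n^{[q]}\equiv 1$, hence $\sum_n\dot\lambda_n^{[q]}\equiv 0$ and $\sum_n\lambda_n^{[q]}(0)=1$, gives $\sum_n a_{mn}^{[q]}=\lambda_m^{[q]}(0)$, i.e.\ $A^{[q]}\mathbf 1=\ell$; inverting yields $\bar A^{[q]}\ell=\mathbf 1$. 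This route avoids the ODE argument entirely and makes the role of the partition-of-unity property explicit.
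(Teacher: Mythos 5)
Your first argument is correct and is essentially the paper's own proof: the paper also sets $f=0$, tests the Galerkin orthogonality first with $v=U-U(0^-)$ (your $W$) to obtain $\frac{1}{2}([U]_0)^2+\frac{1}{2}(U(1)-U(0^-))^2=0$ and kill the jump, and then with $v=\dot{U}$ to force $\dot{U}\equiv 0$; your two-stage testing is the same argument in shifted notation. Your alternative matrix-level argument, however, is a genuinely different route not taken in the paper: summing the entries of (\ref{eq:a,mdg}) over $n$ and using that the Lagrange basis is a partition of unity (so $\sum_n \dot{\lambda}_n^{[q]}\equiv 0$ and $\sum_n \lambda_n^{[q]}(0)=1$) gives $\sum_n a^{[q]}_{mn}=\lambda_m^{[q]}(0)$, i.e.\ $A^{[q]}\mathbf{1}=\ell$, and inverting yields the claim. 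This computation is shorter, avoids the variational/energy manipulation entirely, and isolates the one structural fact that makes the identity true; note only that it presupposes invertibility of $A^{[q]}$, which the lemma's statement (via the existence of $\bar{A}^{[q]}$) already grants, whereas the paper's ODE-style argument has the side benefit of exhibiting the underlying reason conceptually --- the \mdgq\ scheme reproduces constant solutions exactly --- and, with $\xi_0^-=0$, would even deliver that invertibility as a byproduct.
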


\begin{proof}
  As in the proof for the \mcgq\ method, assume that the interval is
  $[0,1]$. Since the value of the expression is independent of $f$ we
  can take $f=0$. We thus want to prove that if $f=0$, then the
  solution $U$ is constant. By the Galerkin orthogonality, we have
  \begin{displaymath}
    [U]_0 v(0) + \int_0^1 \dot{U} v \ dt = 0 \qquad \forall v\in\mathcal{P}^q(0,1),
  \end{displaymath}
  with $U\in\mathcal{P}^q(0,1)$.
  Taking $v=U - U(0^-)$, we have
  \begin{displaymath}
    \begin{array}{rcl}
      0
      &=& ([U]_0)^2 + \int_0^1 \dot{U} (U-U(0^-)) \ dt
       = ([U]_0)^2 + \frac{1}{2}\int_0^1 \OD{}{t} (U-U(0^-))^2 \ dt \\
      &=& \frac{1}{2} (U(0^+)-U(0^-))^2 + \frac{1}{2} (U(1)-U(0^-))^2,
    \end{array}
  \end{displaymath}
  so that $[U]_0 = 0$. Now take $v=\dot{U}$. This
  gives $\int_0^1 (\dot{U})^2 \ dt = 0$. Since then both $[U]_0=0$ and
  $\dot{U}=0$ on $[0,1]$, $U$ is constant and equal to $U(0^-)$,
  and the proof is complete.
\qquad\end{proof}

The \mdgq\ method thus reads as follows:
For every local interval $I_{ij}$,
find $\{\xi_{ijn}\}_{n=0}^{q_{ij}}$, such that for $m=0,\ldots,q_{ij}$ we have
\begin{equation}
  \xi_{ijm} =
  \xi_{ij0}^- +
  \int_{I_{ij}} w_m^{[q_{ij}]}(\tau_{ij}(t)) \ f_i(U(t),t) \ dt
  \label{eq:mdg,xi,expl,app}
\end{equation}
for certain weight functions
$\{w_{n}^{[q]}\}_{n=0}^q\subset\mathcal{P}^{q}(0,1)$.


\begin{thebibliography}{10}

\bibitem{alexander:local}
{\sc S.~Alexander and C.~Agnor}, {\em $n$-body simulations of late stage
  planetary formation with a simple fragmentation model}, ICARUS, 132 (1998),
  pp.~113--124.

\bibitem{petzold:book}
{\sc U.~M.~Ascher and L.~R.~Petzold}, {\em Computer Methods for Ordinary Differential
  Equations and Differential-Algebraic Equations}, SIAM, Philadelphia, 1998.

\bibitem{butcher:book}
{\sc J.~Butcher}, {\em The Numerical Analysis of Ordinary Differential
  Equations. Runge-Kutta and General Linear Methods}, John Wiley, Chichester, UK, 1987.

\bibitem{dahlquist:thesis}
{\sc G.~Dahlquist},
{\em Stability and Error Bounds in the Numerical Integration of Ordinary Differential Equations},
Kungl. Tekn. H\"ogsk Handl. Stockholm. No., 130 (1959).

\bibitem{dave:local}
{\sc R.~Davé, J.~Dubinski, and L.~Hernquist}, {\em Parallel treesph}, New
  Astronomy, 2 (1997), pp.~277--297.

\bibitem{DawKir01}
{\sc C.~Dawson and R.~Kirby}, {\em High resolution schemes for conservation
  laws with locally varying time steps}, SIAM J. Sci. Comput., 22
  (2001), pp.~2256--2281.

\bibitem{delfour:dg}
{\sc M.~Delfour, W.~Hager, and F.~Trochu}, {\em Discontinuous Galerkin methods
  for ordinary differential equations}, Math. Comp., 36 (1981), pp.~455--473.

\bibitem{EJ:actanumerica}
{\sc K.~Eriksson, D.~Estep, P.~Hansbo, and C.~Johnson}, {\em Introduction to
  adaptive methods for differential equations}, in Acta Numerica, 1995, Acta Numer.,
  Cambridge University Press, Cambridge, 1995, pp.~105--158.

\bibitem{CDE}
{\sc K.~Eriksson, D.~Estep, P.~Hansbo, and C.~Johnson},  {\em Computational
  Differential Equations},
  Cambridge University Press, Cambridge, 1996.

\bibitem{EJ:parab:III}
{\sc K.~Eriksson and C.~Johnson}, {\em Adaptive Finite Element Methods for
  Parabolic Problems {\rm III}: Time Steps Variable in Space},
  manuscript, Chalmers University of Technology, G\"oteborg, Sweden.

\bibitem{EJ:parab:I}
{\sc K.~Eriksson and C.~Johnson},  {\em Adaptive finite
  element methods for parabolic problems {\rm I}: A linear model problem}, SIAM J.
  Numer. Anal., 28 (1991), pp.~43--77.

\bibitem{EJ:parab:II}
{\sc K.~Eriksson and C.~Johnson},  {\em Adaptive finite
  element methods for parabolic problems {\rm II}: Optimal error estimates in
  $L_{\infty} L_2$ and $L_{\infty} L_{\infty}$}, SIAM J. Numer. Anal., 32
  (1995), pp.~706--740.

\bibitem{EJ:parab:IV}
{\sc K.~Eriksson and C.~Johnson}, {\em Adaptive finite
  element methods for parabolic problems {\rm IV}: Nonlinear problems}, SIAM J.
  Numer. Anal., 32 (1995), pp.~1729--1749.

\bibitem{EJ:parab:V}
{\sc K.~Eriksson and C.~Johnson}, {\em Adaptive finite
  element methods for parabolic problems {\rm V}: Long-time integration}, SIAM J.
  Numer. Anal., 32 (1995), pp.~1750--1763.

\bibitem{EJ:parab:VI}
{\sc K.~Eriksson, C.~Johnson, and S.~Larsson}, {\em Adaptive finite element
  methods for parabolic problems {\rm VI}: Analytic semigroups}, SIAM J. Numer.
  Anal., 35 (1998), pp.~1315--1325.

\bibitem{ejt:dg}
{\sc K.~Eriksson, C.~Johnson, and V.~Thomée}, {\em Time discretization of
  parabolic problems by the discontinuous Galerkin method}, RAIRO Mod\'el. Math. Anal. Num\'er., 19
  (1985), pp.~611--643.

\bibitem{estep:dg}
{\sc D.~Estep}, {\em A posteriori error bounds and global error control for
  approximation of ordinary differential equations}, SIAM J. Numer. Anal., 32
  (1995), pp.~1--48.

\bibitem{estep:cg}
{\sc D.~Estep and D.~French}, {\em Global error control for the continuous
  Galerkin finite element method for ordinary differential equations}, RAIRO Mod\'el. Math. Anal. Num\'er.,
  28 (1994), pp.~815--852.

\bibitem{EstWil96}
{\sc D.~Estep and R.~Williams}, {\em Accurate parallel integration of large
  sparse systems of differential equations}, Math. Models Methods Appl. Sci., 6
  (1996), pp.~535--568.

\bibitem{FlaLoy97}
{\sc J.~Flaherty, R.~Loy, M.~Shephard, B.~Szymanski, J.~Teresco, and
  L.~Ziantz}, {\em Adaptive local refinement with octree load balancing for the
  parallel solution of three-dimensional conservation laws}, J.  Parallel Distrib.
Comput., 47 (1997), pp.~139--152.

\bibitem{hairerwanner:book1}
{\sc E.~Hairer and G.~Wanner}, {\em Solving Ordinary Differential Equations. {\rm I.} Nonstiff
Problems}, Springer Ser. Comput. Math. 8, Springer-Verlag, Berlin,
  1987.

\bibitem{hairerwanner:book2}
{\sc E.~Hairer and G.~Wanner},  {\em Solving Ordinary
  Differential Equations. {\rm II}. Stiff and Differential-Algebraic Problems},
  Springer Ser. Comput. Math. 14, Springer-Verlag, Berlin, 1991.

\bibitem{Han01}
{\sc P.~Hansbo}, {\em A note on energy conservation for hamiltonian systems
  using continuous time finite elements}, Comm. Numer. Methods Engrg., 17
  (2001), pp.~863--869.

\bibitem{claes:dg}
{\sc C.~Johnson}, {\em Error estimates and adaptive time-step control for a
  class of one-step methods for stiff ordinary differential equations}, SIAM J.
  Numer. Anal., 25 (1988), pp.~908--926.

\bibitem{kessel:local}
{\sc O.\ Kessel-Deynet}, {\em Berücksichtigung ionisierender Strahlung im
  Smoothed-Particle-Hydrodynamics-Verfahren und Anwendung auf die Dynamik von
  Wolkenkernen im Strahlungsfeld massiver Sterne}, Ph.D. thesis,
  Naturwissenschaftlich-Mathematischen Gesamtfakultät,
  Ruprecht-Karls-Universität, Heidelberg, 1999.

\bibitem{logg:oxford}
{\sc A.~Logg}, {\em Multi-Adaptive Error Control for ODES}, NA Group Report 98/20, Oxford University
  Computing Laboratory, Oxford, UK, 1998; also available online from
  {http:// www.phi.chalmers.se/preprints/abstracts/preprint-2000-03.html}.

\bibitem{logg:exjobb}
{\sc A.~Logg},
{\em A Multi-Adaptive ODE-Solver},
M.Sc. thesis,
Department of Mathematics, Chalmers University of Technology, G{\"o}teborg, Sweden, 1998;
also available online from {http:// www.phi.chalmers.se/preprints/abstracts/preprint-2000-02.html}.

\bibitem{logg:lic:I}
{\sc A.~Logg},
{\em Multi-Adaptive Galerkin Methods for ODES {\rm I}: Theory \& Algorithms},
Chalmers Finite Element Center Preprint 2001--09,
http://www.phi.chalmers.se/preprints/abstracts/ preprint-2000-09.html
(25 February 2001).

\bibitem{logg:multiadaptivity:II}
{\sc A.~Logg},
{\em Multi-adaptive Galerkin methods for ODES {\rm II}: Implementation and applications},
SIAM J. Sci. Comput., submitted.

\bibitem{logg:tanganyika}
{\sc A.~Logg},
{\em Tanganyika, version} 1.2.1,
{http://www.phi.chalmers.se/tanganyika/} (10 May 2001).

\bibitem{makino:local}
{\sc J.~Makino and S.~Aarseth},
{\em On a hermite integrator with Ahmad--Cohen scheme for gravitational many-body problems},
Publ. Astron. Soc. Japan, 44 (1992), pp.~141--151.

\bibitem{sz:localerror}
{\sc K.-S. Moon, A.~Szepessy, R.~Tempone, and G.~Zourakis},
{\em Adaptive Approximation of Differential Equations Based on Global and Local Errors},
preprint. TRITA-NA-0006 NADA, KTH, Stockholm, Sweden, 2000.

\bibitem{OshSan83}
{\sc S.~Osher and R.~Sanders},
{\em Numerical approximations to nonlinear conservation laws with locally varying time and space grids},
Math. Comp., 41 (1983), pp.~321--336.

\bibitem{shampine:book}
{\sc L.~Shampine},
{\em Numerical Solution of Ordinary Differential Equations},
Chapman and Hall, London, 1994.

\end{thebibliography}
\end{document}